\newtheorem{theorem}{Theorem}[section]
\theoremstyle{plain}
\newtheorem{claim}[theorem]{Claim}
\newtheorem{conjecture}[theorem]{Conjecture}
\newtheorem{lemma}[theorem]{Lemma}
\numberwithin{equation}{section}
\numberwithin{theorem}{section}
\numberwithin{case}{section}
\numberwithin{subcase}{case}
\renewcommand{\epsilon}{\varepsilon}
\numberwithin{equation}{section}
\def \e{\epsilon}
\def \T{\mathcal{T}}
\begin{document}
\title{Two-regular subgraphs of odd-uniform hypergraphs}
\author{Jie Han} 

\author{Jaehoon Kim}

\address{School of Mathematics, University of Birmingham, 
Edgbaston, Birmingham, B15 2TT, United Kingdom}
\email{J.Han@bham.ac.uk, j.kim.3@bham.ac.uk}

\thanks{The first author is supported by FAPESP (2013/03447-6, 2014/18641-5, 2015/07869-8). The second author is supported by the European Research Council under the European Union's Seventh Framework Programme (FP/2007--2013) / ERC Grant Agreements no. 306349 (J.~Kim).}
\maketitle

\begin{abstract}
Let $k\ge 3$ be an odd integer and let $n$ be a sufficiently large integer.
We prove that the maximum number of edges in an $n$-vertex $k$-uniform hypergraph containing no $2$-regular subgraphs is $\binom{n-1}{k-1} + \lfloor\frac{n-1}{k} \rfloor$, and the equality holds if and only if $H$ is a full $k$-star with center $v$ together with a maximal matching omitting $v$. This verifies a conjecture of Mubayi and Verstra\"{e}te.
\end{abstract}

\section{Introduction}\label{Introduction}

Tur\'an problems are central in extremal graph theory.
In general, Tur\'an-type problems question on the maximum number of edges of a (hyper)graph that does not contain certain subgraph(s). 
Their generalizations to hypergraphs appear to be extremally hard -- for example, despite many existing works, the Tur\'an density of tetrahedron (four triples on four vertices) is still unknown (see~\cite{Keevash11}).

Erd\H{o}s \cite{Erdos77} asked to determine the maximum size $f_k(n)$ of an $n$-vertex $k$-uniform hypergraph without any generalized 4-cycles, i.e., four distinct edges $A, B, C, D$ such that $A\cup B=C\cup D$ and $A\cap B=C\cap D=\emptyset$.
For $k=2$, this reduces to a well-known problem of studying the Tur\'an number for the 4-cycle.
It is known that $f_2(n)=(1+o(1))n^{3/2}$ \cite{Brown, ERS} and the exact value of $f_2(n)$ for infinitely many $n$ is obtained in \cite{Furedi83}.
For $k\ge 3$, F\"uredi \cite{Furedi} showed that $\binom{n-1}{k-1} + \lfloor\frac{n-1}{k} \rfloor\le f_k(n)\le \frac{7}{2}\binom{n}{k-1}$ and conjectured the following. \footnote{In fact, F\"uredi \cite{Furedi} found a slightly better lower bound for $k=3$, namely, $f_3(n)\ge \binom{n}{2}$ for $n\equiv 1$ or $5$ mod $20$.}
\begin{conjecture}\label{conj: furedi}
For $k\geq 4$ and $n\in \mathbb{N}$, $f_k(n) = \binom{n-1}{k-1} + \lfloor\frac{n-1}{k}$.
\end{conjecture}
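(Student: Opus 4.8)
To prove the main result (the extremal bound of the abstract for $k$-uniform hypergraphs, $k$ odd, with no $2$-regular subgraph), the plan is to reduce everything to the way a single high-degree vertex interacts with the edges that avoid it, and to isolate where the parity of $k$ is used. Write $S_v$ for the full star at $v$, namely all $\binom{n-1}{k-1}$ edges through $v$; the extremal construction is $S_v$ together with a maximum matching on $V\setminus\{v\}$. For the lower bound I would verify directly that this is $2$-regular-free. In any $2$-regular subgraph $F$ the vertex $v$ has degree $0$ or $2$, since every star edge contains it; if $v\notin V(F)$ then $F$ consists of disjoint matching edges and has maximum degree $1$, impossible. So $F$ uses exactly two star edges $\{v\}\cup T_1,\{v\}\cup T_2$ with $|T_1|=|T_2|=k-1$, and its matching edges have degree $1$ on $T_1\triangle T_2$ and $0$ elsewhere; being disjoint they partition $T_1\triangle T_2$. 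If $b$ matching edges occur then $bk=|T_1\triangle T_2|\le 2(k-1)<2k$, forcing $b\le 1$; but $b=0$ gives $T_1=T_2$ and a repeated edge, while $b=1$ makes $k=|T_1\triangle T_2|$ even, contradicting $k$ odd. This is the first appearance of oddness.

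The engine for the upper bound is the converse of this analysis. Fix $v$ and suppose $F$ is a $2$-regular subgraph using two star edges $\{v\}\cup T_1,\{v\}\cup T_2$. The edges of $F$ avoiding $v$ form a subhypergraph $F'\subseteq H-v$ of maximum degree $\le 2$ whose odd-degree vertices are exactly $O:=T_1\triangle T_2$, a set of even size at most $2(k-1)$. Conversely, any $F'\subseteq H-v$ of maximum degree $\le 2$ with nonempty even odd-degree set $O$ of size at most $2(k-1)$ completes to a $2$-regular subgraph of $H$: split $O=O_1\sqcup O_2$ into halves and pad with a common set $C$, disjoint from $V(F')$, of size $k-1-\tfrac12|O|$, provided the two star edges $\{v\}\cup(O_1\cup C)$ and $\{v\}\cup(O_2\cup C)$ lie in $H$. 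The decisive instance is $F'=\{e_1,e_2\}$ for two edges of $H-v$ meeting in $t\ge 1$ vertices: then $O=e_1\triangle e_2$ has even size $2(k-t)\le 2(k-1)$, so \emph{every} intersecting pair off $v$ completes to a $2$-regular subgraph once the required star edges are present. Hence if $v$ has full degree $\binom{n-1}{k-1}$, then $H-v$ is a matching and $|E(H)|=d(v)+|E(H-v)|\le\binom{n-1}{k-1}+\lfloor(n-1)/k\rfloor$.

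In the general case I would take $v$ of maximum degree and set $s=\binom{n-1}{k-1}-d(v)$, the number of absent star edges. A pair in $H-v$ meeting in $t$ vertices has of order $n^{t-1}$ admissible padding sets $C$, and its completion is blocked only if every choice meets an absent star edge; such a pair is therefore forbidden once $s=o(n^{t-1})$. This suggests a layered cleanup: pairs of large intersection are eliminated first, since they tolerate large $s$; removing them strips edges from $H-v$, which improves the estimate on $d(v)$ and hence lowers $s$, which in turn permits eliminating pairs of smaller intersection. Iterating should drive $H-v$ toward a matching and $d(v)$ toward $\binom{n-1}{k-1}$.

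The main obstacle is making this bootstrap converge down to $s=O(1)$ and then settling the $t=1$ case and the exact count. Two edges meeting in a single vertex admit only $\tfrac12\binom{2k-2}{k-1}$ completions, so they are forbidden only when, inside the corresponding $(2k-2)$-set, no $(k-1)$-subset and its complement are both star edges, forcing at least half of these $\binom{2k-2}{k-1}$ local star edges to be absent. One must then globally weigh these forced-absent star edges against the extra non-star edges and show that any deviation from the matching structure costs more in missing star edges than it gains, so that $|E(H)|$ cannot exceed $\binom{n-1}{k-1}+\lfloor(n-1)/k\rfloor$ and equality forces $s=0$ together with $H-v$ a maximum matching. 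I expect the genuinely hard points to be obtaining an approximate star structure strong enough to launch the bootstrap — presumably through a stability argument that leverages the fact that a $2$-regular-free hypergraph contains no generalized $4$-cycle — and controlling this final trade-off that yields both the exact bound and the uniqueness of the extremal configuration.
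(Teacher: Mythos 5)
The statement you were asked to prove is Conjecture~\ref{conj: furedi}, F\"uredi's conjecture determining $f_k(n)$, the maximum size of an $n$-vertex $k$-uniform hypergraph with no \emph{generalized $4$-cycle}. The paper does not prove this statement --- it is cited as an open conjecture from F\"uredi's 1984 paper --- and what you have sketched is instead a plan for Theorem~\ref{thm:main}, the exact bound under the \emph{stronger} hypothesis that $H$ has no $2$-regular subgraph whatsoever, and only for odd $k$. The mismatch is not cosmetic: your engine forbids every four-edge completion $e_1,\,e_2,\,\{v\}\cup O_1\cup C,\,\{v\}\cup O_2\cup C$ over an \emph{arbitrary} equipartition $O_1\sqcup O_2$ of $e_1\triangle e_2$, but such a configuration is a generalized $4$-cycle only for the single split $O_1=e_2\setminus e_1$, $O_2=e_1\setminus e_2$ (the disjointness condition $A\cap B=C\cap D=\emptyset$ fails for every other split, since $O_1\cap e_1\neq\emptyset$). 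A hypergraph with no generalized $4$-cycles may legally contain all the other $\tfrac12\binom{2(k-t)}{k-t}-1$ completions, and it is precisely this abundance of equipartitions that your counting (and the paper's, e.g.\ the factor $\tfrac12\binom{2k-2t}{k-t}$ in Claim~\ref{cl: lower bound on H*} and the equipartitions with $|S\cap e_1|=1$ used throughout Subsections~3.2--3.3) relies on. So the approach cannot establish Conjecture~\ref{conj: furedi}. Note also that the conjecture covers \emph{even} $k\geq 4$, where your parity argument cuts the other way: for even $k$ the star-plus-matching construction does contain a $2$-regular subgraph (your own case $b=1$), yet it contains no generalized $4$-cycle, which is exactly why $f_k(n)$ is conjectured to exceed the even-$k$ bound $\binom{n-1}{k-1}$ of Theorem~\ref{thm:MV}; any proof of the conjecture must therefore exploit the generalized-$4$-cycle structure specifically, not $2$-regularity.

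Even read charitably as an attempt at Theorem~\ref{thm:main} (the statement your text actually addresses), the proposal is an outline rather than a proof: the two decisive steps --- making the ``bootstrap'' converge to $s=O(1)$, and the final $t=1$ trade-off yielding the exact count and uniqueness --- are explicitly deferred. The paper's route through exactly those difficulties is quite different in its details: it invokes the stability theorem of Mubayi and Verstra\"ete (Theorem~\ref{thm:stability}) to fix a vertex $v$ with $x=\binom{n-1}{k-1}-d_H(v)\leq\epsilon^3 n^{k-1}$, locates the scale of $x$ via an integer $\ell$ with $\epsilon^3 n^{\ell-1}\leq x\leq \epsilon^3 n^{\ell}$, finds $2k$ disjoint $(\ell-1)$-sets of low degree in $\tilde H$ to build the family $\mathcal{T}$, splits $H^*=H_0\cup H_1$ accordingly, and closes the loop by two separate double-counting arguments ($|H_1|\leq\epsilon x$ and $|H_0|<(1-\epsilon)x+\lfloor (n-1)/k\rfloor$), the latter requiring the covering lemma of Balogh--Bohman--Mubayi (Lemma~\ref{lem:BBM}) to handle the non-trivial intersecting families $F_u$. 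None of this machinery, nor a workable substitute for it, appears in your sketch, so even for the weaker theorem the hard part remains open --- and for the conjectured statement itself, no proof exists in the paper or in the literature.
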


The lower bound is achieved by a full $k$-star together with a maximal matching omitting its center.
Here a \emph{full $k$-star} is a $k$-uniform $n$-vertex hypergraph which consists of all $\binom{n-1}{k-1}$ sets of size $k$ containing a given vertex $v$, and the given vertex $v$ is called the {\em center} of the full $k$-star.
The most recent result on $f_k(n)$ is due to Pikhurko and Verstra\"{e}te~\cite{PV}, who showed that $f_k(n)\le \min\{1 + {2}/{\sqrt{k}},7/4\} \binom{n}{k-1}$, and $f_3(n)\le \frac{13}{9}\binom{n}{2}$. This improves a result by Mubayi and Verstra\"{e}te~\cite{MV2004}. In \cite{K}, the second author made a related conjecture about $k$-uniform hypergraphs containing no $r$ pairs of disjoint sets with the same union when $k$ is sufficiently bigger than $r$.

Since the generalized 4-cycles are 2-regular, i.e., each vertex has degree 2, one way to relax the original problem of Erd\H{o}s is to consider the maximum size of $n$-vertex (hyper)graphs without any 2-regular sub(hyper)graphs (or more generally, without any $r$-regular subgraphs).
In fact, the (relaxed) problem has its own interest even for graphs.
Although it is trivial for $r=2$, Pyber \cite{Pyber1985} proved that the largest number of edges in a graph with no $r$-regular subgraphs is $O(n\log{n})$ for any $r\geq 2$, and in \cite{PRS1995}, Pyber, R\"{o}dl and Szemer\'{e}di showed that there are graphs with no $r$-regular subgraphs having $\Omega(n\log\log{n})$ edges for any $r\geq 3$.

%Another interesting generalization is considering hypergraphs. 
For non-uniform hypergraphs, it is easy to see that any hypergraph with no $r$-regular subgraphs has at most $2^{n-1}+r-1$ edges and Kostochka and the second author~\cite{KK} showed that if $n\geq \max\{425,r+1\}$ then any $n$-vertex hypergraph with no $r$-regular subgraphs having the maximum number of edges must contain a vertex of degree $2^{n-1}$.
For uniform hypergraphs, the problem becomes more interesting. 
One natural candidate for the extremal example of $k$-uniform hypergraphs with no $2$-regular subgraphs is the full $k$-star.
%, that is, a $k$-uniform $n$-vertex hypergraph which consists of all $\binom{n-1}{k-1}$ $k$-sets containing a given vertex $v$. 
%We say the given vertex $v$ is the {\em center} of the full $k$-star.
Indeed, Mubayi and  Verstra\"{e}te \cite{MV2009} proved the following.

\begin{theorem}\cite{MV2009}\label{thm:MV}
For every even integer $k\geq 4$, there exists $n_k$ such that the following holds for all $n\geq n_k$. If $H$ is an $n$-vertex $k$-uniform hypergraph with no 2-regular subgraphs, then
$|H|\leq \binom{n-1}{k-1}$.
Moreover, equality holds if and only if $H$ is a full $k$-star.
\end{theorem}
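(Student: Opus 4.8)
The plan is to split the argument into a clean ``finishing'' step, which uses a concrete $2$-regular gadget available only when $k$ is even, and a single stability input, which I expect to be the real obstacle. Throughout, for a vertex $v$ write $S_v$ for the edges of $H$ through $v$ (so $|S_v|=\deg(v)$) and $T_v=H\setminus S_v$ for the edges avoiding $v$; I identify $S_v$ with the \emph{link} $H_v$, a $(k-1)$-uniform hypergraph on $V\setminus\{v\}$, and call a $(k-1)$-set $S\not\ni v$ a \emph{hole} if $S\notin H_v$. The number of holes is exactly $\binom{n-1}{k-1}-\deg(v)$.

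The key gadget is as follows. Fix $f\in T_v$. Because $k$ is even, any partition $f=A'\sqcup B'$ into halves $|A'|=|B'|=k/2$, together with any $(k/2-1)$-set $C\subseteq V\setminus(f\cup\{v\})$, produces three edges $f$, $\{v\}\cup C\cup A'$, $\{v\}\cup C\cup B'$ that form a $2$-regular subgraph whenever the latter two lie in $H$: the vertex $v$ and every vertex of $C$ lie in exactly the two star edges, while $A'$ and $B'$ each lie in $f$ and in exactly one star edge, so every covered vertex has degree exactly $2$ (this is the ``triangle'' with parts $A'$, $B'$, $C\cup\{v\}$, each of size $k/2$, and it is precisely here that parity of $k$ is used, explaining why the odd case is genuinely different). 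Hence, to avoid a $2$-regular subgraph, for each such $(C,\{A',B'\})$ at least one of the two star edges must be a hole. For fixed $C$ the $\binom{k}{k/2}$ sets $C\cup A'$ pair into complementary pairs, forcing $\tfrac12\binom{k}{k/2}$ holes, and summing over the choices of $C$ forces at least
\[ N:=\tfrac12\binom{k}{k/2}\binom{n-1-k}{k/2-1}=\Theta(n^{k/2-1}) \]
distinct holes, each satisfying $|S\cap f|=k/2$. Consequently, if $\deg(v)\ge\binom{n-1}{k-1}-(N-1)$ then the total number of holes is strictly less than $N$, so no $f\in T_v$ can exist; thus $H\subseteq S_v$ and $|H|=\deg(v)\le\binom{n-1}{k-1}$, with equality forcing a complete link, i.e.\ the full $k$-star. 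The ``if'' direction of the characterization is immediate, since a nonempty $2$-regular subfamily of a star would use exactly two edges $\{v\}\cup S_1,\{v\}\cup S_2$ and then every vertex of $S_1\triangle S_2$ would have degree $1$, forcing $S_1=S_2$.

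What remains — and where I expect the main obstacle to lie — is the stability input: any $k$-uniform $H$ with no $2$-regular subgraph and $|H|\ge\binom{n-1}{k-1}$ must contain a vertex of deficiency $\binom{n-1}{k-1}-\deg(v)<N=\Theta(n^{k/2-1})$. Averaging only yields a vertex of degree $\Theta(n^{k-2})$, far short of the near-completeness $\binom{n-1}{k-1}-\Theta(n^{k/2-1})=\Theta(n^{k-1})$ needed above, so the content is genuinely that the full star is essentially the \emph{unique} dense $2$-regular-free configuration. I would try to establish this by bootstrapping the same gadget: start from a high-degree vertex produced by averaging, then analyze the link $H_v$ (itself a dense $(k-1)$-graph) jointly with $T_v$, showing that if $\deg(v)$ stays far below near-complete while $|H|$ remains large, then either a triangle-type $2$-regular subgraph appears among a few moderately-high-degree vertices, or the hole count summed over many edges of $T_v$ exceeds what $\deg(v)$ allows. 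The delicate point is that holes forced by different edges $f\in T_v$ overlap heavily — a single hole $S$ can be charged by up to $\Theta(n^{k/2})$ gadgets — so a naive double count loses a factor of $n$; overcoming this, presumably by localizing the count to edges sharing a fixed $(k/2)$-set or by a weighting that makes the charging close to injective, is where I expect the technical heart of the proof to be.
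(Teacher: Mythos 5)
Your finishing gadget is correct and is exactly the mechanism this paper attributes to the even case: the three edges $f$, $\{v\}\cup C\cup A'$, $\{v\}\cup C\cup B'$ form the $2$-regular configuration on $3k/2$ vertices that the authors cite as the advantage of even $k$ (it is the even-$k$ analogue of Claim~\ref{cl: produce f}), and your count of at least $\tfrac12\binom{k}{k/2}\binom{n-k-1}{k/2-1}$ distinct holes forced by each edge $f\in T_v$ is right, as is the easy ``if'' direction. Note that the paper does not reprove Theorem~\ref{thm:MV} (it is quoted from Mubayi--Verstra\"ete), but it does state the two ingredients of that proof, Theorems~\ref{thm:asymptotics} and~\ref{thm:stability}, and mirrors the proof scheme in its own Section~3.

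The genuine gap is the ``stability input'' you defer. As you state it --- every $2$-regular-free $H$ with $|H|\ge\binom{n-1}{k-1}$ has a vertex of deficiency less than $N=\Theta(n^{k/2-1})$ --- this is not a stability statement one can quote; it is essentially equivalent to the theorem itself, since by the theorem any such $H$ is the full star (deficiency $0$), while conversely your one-paragraph finish derives the theorem from it. The stability theorem actually available (Theorem~\ref{thm:stability}) only supplies a vertex with $d_H(v)\ge(1-\epsilon)\binom{n-1}{k-1}$, i.e.\ deficiency up to $\epsilon\binom{n-1}{k-1}$, which exceeds your target $N$ by a factor of order $n^{k/2}$. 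Closing that gap is the actual content of the Mubayi--Verstra\"ete argument (and of Section~3 here in the harder odd case): one does not upgrade stability to deficiency $o(n^{k/2-1})$; instead one assumes the star at $v$ is not full, sets $x:=\binom{n-1}{k-1}-d_H(v)\ge1$, and runs a double count of gadget tuples against the $x$ holes, with auxiliary structure (degree bounds on small sets as in Claim~\ref{clm:Ffree}, the low-degree sets $S_i$ and the family $\mathcal{T}$, convexity, intersecting-family analysis) introduced precisely to tame the heavy overlap of charges --- a single hole being chargeable by $\Theta(n^{k/2})$ gadgets --- that you correctly flag in your last sentence. Your bootstrapping sketch names this obstacle but does not resolve it, so the proposal as written reduces the theorem to an unproved assertion of comparable strength rather than proving it.
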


% that for any even integer $k\geq 4$ and sufficiently large integer $n$, an $n$-vertex $k$-uniform hypergraph $H$ with no $2$-regular subgraphs has at most ${n-1\choose k-1}$ edges, and equality holds if and only if $H$ is a full $k$-star. 
In \cite{K} the second author generalized the arguments in \cite{MV2009} and showed similar results for $k$-uniform hypergraphs with no $r$-regular subgraphs when $r\in \{3,4\}$.
Moreover, for odd $k$, Mubayi and Verstra\"{e}te \cite{MV2009} conjectured that $|H|\leq \binom{n-1}{k-1} + \left\lfloor \frac{n-1}{k} \right\rfloor$, and the only extremal graph is the full $k$-star plus a matching omitting its center.
In this paper, we prove this conjecture. 
%The proof of Theorem~\ref{thm:main} develops ideas in~\cite{MV2009}.

%\begin{conjecture}\cite{MV2009} \label{conj:main}
%For an odd integer $k\geq 3$, there exists $n_k$ such that the following holds for all $n\geq n_k$. If $H$ is an $n$-vertex $k$-uniform hypergraph with no 2-regular subgraphs, then
%$$|H|\leq \binom{n-1}{2} + \left\lfloor \frac{n-1}{k} \right\rfloor.$$
%Moreover, equality holds only if $H$ is a full star with center $v$ together with a maximal matching omitting $v$.
%\end{conjecture}

%This conjecture is a weaker version of a conjecture by F\"{u}redi. In \cite{Furedi}, F\"{u}redi conjectured that for $k>3$, a $k$-uniform hypergraph containing no two pairs of disjoint sets with the same union has at most $\binom{n-1}{k-1}+ \lfloor \frac{n-1}{k} \rfloor$ edges. Since two pairs of disjoint sets with the same union forms a $2$-regular subgraphs, this conjecture implies Conjecture~\ref{conj:main}. There are partial results on this conjecture due to F\"{u}redi~\cite{Furedi}, Mubayi and Verstra\"{e}te~\cite{MV2004}. The most recent results is by Pikhurko, and Verstra\"{e}te~\cite{PV} showing that such hypergraph contains at most $(1 + \frac{2}{\sqrt{k}})\binom{n}{k-1}$ edges. In \cite{K}, the second author made a related conjecture about $k$-uniform hypergraphs containing no $r$ pairs of disjoint sets with the same union when $k$ is sufficiently bigger than $r$.

%In this paper, we prove the Conjecture~\ref{conj:main}. The proof of Theorem~\ref{thm:main} develops ideas in \cite{MV2009}.

\begin{theorem}\label{thm:main}
For every odd integer $k\geq 3$, there exists $n_k$ such that the following holds for all $n\geq n_k$. If $H$ is an $n$-vertex $k$-uniform hypergraph with no 2-regular subgraphs, then
$$|H|\leq \binom{n-1}{k-1} + \left\lfloor \frac{n-1}{k} \right\rfloor.$$
Moreover, equality holds if and only if $H$ is a full $k$-star with center $v$ together with a maximal matching omitting $v$.
\end{theorem}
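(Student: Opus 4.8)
The plan rests on a parity phenomenon that is special to odd $k$. If $F$ is a $2$-regular subhypergraph with $m$ edges, then counting incidences gives $km=\sum_{u\in V(F)}\deg_F(u)=2|V(F)|$, so $km$ is even and, $k$ being odd, $m$ must be even; moreover each vertex lies in exactly $0$ or $2$ edges of $F$. These two facts already settle the constructive direction. In the full star at $v$ together with a matching avoiding $v$, any $2$-regular $F$ contains $0$ or $2$ edges through $v$, since $\deg_F(v)\in\{0,2\}$. If it uses none, then $F$ lies inside the matching and must be empty; if it uses two star edges $A=\{v\}\cup A'$ and $B=\{v\}\cup B'$, then, writing $b$ for the number of matching edges of $F$, parity forces $b$ to be even, each such matching edge must be covered within the degree-one region $A'\triangle B'$, and $b\ge 2$ would require at least $2k$ vertices against $|A'\triangle B'|\le 2(k-1)$, while $b=0$ leaves the nonempty set $A'\triangle B'$ carrying vertices of degree one. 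Every case is contradictory, so the construction contains no $2$-regular subgraph; this gives the lower bound together with the sufficiency in the equality statement.

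Next I would treat the idealized case in which $H$ contains the full star at a vertex $v$, and let $D$ be the set of edges of $H$ avoiding $v$. Any $2$-regular $F$ uses $0$ or $2$ star edges. With $0$, it lies in $D$; with two star edges $A,B$ as above, its non-star part $F'\subseteq D$ must have odd-degree set exactly $A'\triangle B'$ and all other degrees in $\{0,2\}$. Since the star is full, $A'\triangle B'$ realizes every even subset of $V\setminus\{v\}$ of size at most $2(k-1)$. Applying this to $F'=\{e,f\}$ with $e,f\in D$: if $e,f$ are disjoint the odd-degree set $e\cup f$ has size $2k>2(k-1)$ and is unrealizable, which is exactly why a matching is permitted, whereas if $e\cap f\ne\emptyset$ then $|e\triangle f|=2(k-|e\cap f|)\le 2(k-1)$ is realizable and $\{A,B,e,f\}$ is a $2$-regular subgraph. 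Hence $D$ must be a matching, and a sub-matching $F'$ has odd-degree set of size $k|F'|$, which is realizable only for $F'=\emptyset$; so no $2$-regular subgraph survives. This yields $|D|\le\lfloor(n-1)/k\rfloor$ and the exact bound in the full-star case, with equality precisely when $D$ is a maximum matching on $V\setminus\{v\}$.

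The substance of the proof is to locate the center and to discard the full-star assumption, and I expect both to be delicate. For the first, I would let $v$ be a vertex of maximum degree and show, via the forbidden configurations and a counting argument, that $|H|\ge\binom{n-1}{k-1}$ forces $\deg(v)=\binom{n-1}{k-1}-o(n^{k-1})$: the heuristic is that edges spread too evenly would produce two non-star edges sharing a vertex together with two present star edges realizing the required symmetric difference, hence a forbidden $2$-regular subgraph. The main obstacle is the exact step. Writing $m_v=\binom{n-1}{k-1}-\deg(v)$ for the number of absent star edges and $D$ for the non-star edges, the desired bound is equivalent to $|D|\le m_v+\lfloor(n-1)/k\rfloor$, so every unit of non-star surplus beyond a matching must be charged to a missing star edge. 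The mechanism is that whenever $e,f\in D$ meet, each of the many pairs of star edges with $A'\triangle B'=e\triangle f$ and common part disjoint from $e\cup f$ must be absent, forcing $m_v$ to grow. Converting this local abundance of missing star edges into the clean global inequality, while simultaneously excluding the larger $2$-regular configurations (on six, eight, $\dots$ edges) that the weakened hypotheses now allow and ruling out any near-extremal competitor other than the star-plus-matching, is the technical heart and the step I expect to demand the most care.
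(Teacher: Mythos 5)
Your parity analysis of the construction, your proof of the exact bound in the idealized full-star case, and your overall frame are all correct, and the frame coincides with the paper's: the paper also imports the asymptotic and stability theorems of Mubayi and Verstra\"ete (Theorems~\ref{thm:asymptotics} and~\ref{thm:stability}) to locate a vertex $v$ with $d_H(v)\ge\binom{n-1}{k-1}-\epsilon^3 n^{k-1}$, and its basic engine (Claim~\ref{cl: produce f}) is exactly your ``mechanism'': two intersecting non-star edges $e,f$ together with a pair of star edges realizing $e\triangle f$ force, for every choice of the common part $A$ and every equipartition $\{S,S'\}$ of $e\triangle f$, at least one of $A\cup S$, $A\cup S'$ to be a missing star edge. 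The genuine gap is that your proposal stops exactly where the proof begins. You reduce the theorem to the inequality $|D|\le m_v+\lfloor(n-1)/k\rfloor$ and then declare the conversion of ``local abundance of missing star edges'' into this global inequality to be ``the technical heart and the step I expect to demand the most care'' --- but you supply no argument for it, and it is not routine. The difficulty is self-referential: each intersecting pair in $D=H^*$ forces many sets into $\tilde H$ (so $x=m_v$ grows), but a larger $x$ permits more edges in $H^*$ (by the analogue of \eqref{eq: size of H*}), hence more intersecting pairs, and a naive charging argument does not close. The paper tames this with machinery absent from your sketch: it first proves $x\ge n-2k+1$ (Claim~\ref{clm:n}), fixes the scale $\ell$ with $\epsilon^3 n^{\ell-1}\le x\le\epsilon^3 n^{\ell}$, finds $2k$ disjoint $(\ell-1)$-sets of low $\tilde H$-degree (Claim~\ref{clm:1}), uses their links to define the small family $\mathcal{T}$ of $(k-\ell)$-sets, splits $H^*$ into $H_1$ (edges containing some $T\in\mathcal{T}$) and $H_0$, proves the crucial degree bound that every $\ell$-set $L$ has $|N_{H^*}(L)\setminus\mathcal{T}|\le 1$ (Claim~\ref{clm:Ffree}), and then runs two separate convexity double counts to get $|H_1|\le\epsilon x$ and $|H_0|<(1-\epsilon)x+\lfloor(n-1)/k\rfloor$; the $H_0$ count additionally needs that each link family $F_u=N_{H_0}(u)\setminus\tilde H$ is intersecting and, for the non-trivial ones, the Balogh--Bohman--Mubayi covering lemma (Lemma~\ref{lem:BBM}). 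None of this is derivable from what you wrote down.

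One secondary point: you misdiagnose where the remaining danger lies. You anticipate having to exclude ``larger $2$-regular configurations (on six, eight, $\dots$ edges) that the weakened hypotheses now allow,'' but the paper never uses any forbidden configuration other than four-edge $2$-regular subgraphs on $2k$ vertices --- every contradiction it derives (in Claims~\ref{cl: produce f}, \ref{clm:n}, \ref{clm:Ffree} and~\ref{cl: intersecting}) is of this form, which is precisely why the concluding remarks can state Conjecture~\ref{conj:1} forbidding only $2k$-vertex configurations. So the hard step is not a case analysis over bigger configurations but the quantitative bootstrap described above, and that is the content your proposal is missing. (A small imprecision elsewhere: your phrase ``each of the many pairs of star edges $\dots$ must be absent'' should read ``from each such pair at least one must be absent,'' which is what Claim~\ref{cl: produce f} gives and what your counting heuristic actually requires.)
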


Theorem~\ref{thm:MV} \cite{MV2009} is proved via the stability approach introduced by Erd\H{o}s and Simonovits~\cite{Simonovits}, which has been widely used in extremal set theory.
To prove Theorem~\ref{thm:main}, we also use the stability approach as well as some other ideas from \cite{MV2009}.
One advantage when $k$ is even is that there exist $2$-regular $k$-uniform hypergraphs on $3k/2$ vertices.
%, which makes the proof of Theorem~\ref{thm:MV} easier.
In contrast, for odd $k$, the smallest 2-regular $k$-uniform hypergraphs have order $2k$ and thus the analysis is more difficult (this is also the reason why more edges are allowed in the extremal graph for odd $k$, which makes the structure more complicated).
In our proof, we use some new tricks to overcome this difficulty.

\section{Preliminaries}

For a positive integer $N$ we write $[N]$ to denote the set $\{1,\dots,N\}$. 
%A \emph{hypergraph} $H$ is a set system with its vertex set denoted by $V(H)$ (we identify $H$ as its set of edges).
We write $V(H)$ for the set of vertices, $E(H)$ for the set of edges in a hypergraph $H$. For a hypergraph $H$, we view $H$ as a collection of edges, thus sometimes $H$ refers to $E(H)$.
We say that $H$ is a {\em $k$-uniform hypergraph} or \emph{$k$-uniform family} if every edge of $H$ has size exactly $k$.
Moreover, we always say \emph{subgraph} instead of subhypergraph.
For a hypergraph $H$ and a set $S\subseteq V(H)$, 
$$N_{H}(S):=\{ e\setminus S : e\in E(H), S\subseteq e\}\enspace \text{ and }\enspace d_{H}(S)= |N_{H}(S)|.$$ We say a set $S$ is an {\em $s$-set} if $|S|=s$.
%For a hypergraph $H$ and a vertex set $S$, we define $L_H(S)$, the {\em link graph of $S$ in $H$} by 
%$$V(L_H(S)):=V(H)\setminus S,\enspace E(L_H(S)):=\{e\setminus S: e \in E(H), S\subseteq e\}.$$  
For a vertex $x\in V(H)$, we write $N_{H}(x):= N_{H}(\{x\})$ and $d_{H}(x):= d_{H}(\{x\})$.
% and $L_H(x):= L_H(\{x\})$. 
% We say $e\in H$ or $e$ {\em belongs to} $H$ or {\em $H$ contains $e$} if $e\in E(H)$.
% For two hypergraphs $H$ and $H'$, $H\setminus H'$ denotes a hypergraph with vertex set $V(H)$ and edge set $E(H)\setminus E(H')$ and $H\cap H'$ denotes a hypergraph with vertex set $V(H)\cap V(H')$ and edge set $E(H)\cap E(H')$.
We say $\{S,S'\}$ is an {\em equipartition} of a set $A$ if $|S|=|S'|$, $S\cap S'=\emptyset$ and $S\cup S'= A$.
%Throughout this paper, for simplicity, we write $H$ for $E(H)$.

 In order to prove Theorem~\ref{thm:main}, we use the following two theorems proved in \cite{MV2009}. These theorems give a rough structure of near-extremal hypergraphs.

%We used them to prove Theorem~\ref{thm:main}.
\begin{theorem}\cite{MV2009} \label{thm:asymptotics}
For given $\epsilon>0$ and $k\in \mathbb{N}$, there exists $n_0 = n_0(k,\epsilon)$ such that the following holds for all $n\geq n_0$.
If $H$ is an $n$-vertex $k$-uniform hypergraph with no 2-regular subgraphs, then 
$$|H|\le (1+\epsilon)\binom{n-1}{k-1}.$$
\end{theorem}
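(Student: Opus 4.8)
The plan is to prove the contrapositive in supersaturated form: if $H$ is $k$-uniform on $n$ vertices with $|H|\ge(1+\epsilon)\binom{n-1}{k-1}$ and $n$ is large, then $H$ contains a $2$-regular subgraph. It is worth recording first what such a subgraph must look like. A nonempty $F\subseteq E(H)$ is $2$-regular exactly when every vertex it covers lies in precisely two edges of $F$; dually, taking the edges of $F$ as nodes and each covered vertex as a link between the two edges containing it, $F$ corresponds to a $k$-regular multigraph on $|F|$ nodes. In particular $F$ can have no \emph{private} vertex (one lying in a single edge of $F$), so $2$-regular subgraphs are intrinsically ``spread'': they cannot be hidden inside a sunflower. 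This already signals the main difficulty. The naive linear-algebra attempt — viewing edges as vectors over $\mathbb{F}_2$ and extracting a dependency once $|H|>n$ — only produces an \emph{even} subgraph (all degrees even), and the full $k$-star itself contains a wealth of such circuits (e.g.\ $\{v,a,b\},\{v,b,c\},\{v,c,d\},\{v,d,a\}$ when $k=3$, in which $v$ has degree $4$) while containing no $2$-regular subgraph at all. Thus parity is not enough: the real task is to force every degree down to exactly two, which is precisely what makes the problem subtle for general, and later odd, $k$.

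Accordingly the strategy is structural. I fix a vertex $v$ of maximum degree $\Delta$ and write $H=S\cup H'$, where $S$ is the set of edges through $v$ (so $|S|=\Delta\le\binom{n-1}{k-1}$) and $H'$ is the set of edges avoiding $v$, and I split into two regimes according to $\Delta$. In the \emph{spread regime}, where $\Delta\le(1-\delta)\binom{n-1}{k-1}$ for a small constant $\delta=\delta(\epsilon)$, the degree sequence is bounded away from the star profile, so the edges are genuinely dispersed; here I would run a counting/alteration argument to locate a small spread $2$-regular configuration directly, exploiting that many edges meet in controlled ways. In the \emph{near-star regime}, where $\Delta>(1-\delta)\binom{n-1}{k-1}$, the bound $|S|\le\binom{n-1}{k-1}$ forces $|H'|\ge\epsilon\binom{n-1}{k-1}$, so there is a large reservoir of off-center edges; moreover the link of $v$ is now almost complete, so for essentially any prescribed set of ``missing'' vertices there are edges through $v$ realizing it. This flexibility is what I would exploit to ``close up'' configurations assembled from $H'$ into a genuine $2$-regular subgraph.

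The heart of the matter, and the step I expect to be the main obstacle, is the construction of an honest $2$-regular subgraph from the abundant off-center edges. The idea is to build a short spread cycle: select a few edges of $H'$ together with \emph{at most two} edges of $S$ so that the resulting multiset of covered vertices is exactly doubled. Using exactly two edges of $S$ pins the over-covered core vertex $v$ at degree precisely $2$ — capping the one structural source of excess coverage — while the remaining $H'$-edges must cover the symmetric difference of the two star edges once more, double any new vertices they introduce, and touch nothing else. Guaranteeing that such a correction can always be completed is exactly the hard supersaturation statement: sufficiently many off-center edges must force an exact doubling, and this is where the rigidity of $2$-regular configurations (whose smallest instances already span $2k$ vertices) bites hardest. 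Once this is carried out in both regimes — the spread regime producing a dispersed $2$-regular subgraph and the near-star regime a closed-up one — every case yields a $2$-regular subgraph, which is the contrapositive of $|H|\le(1+\epsilon)\binom{n-1}{k-1}$, completing the proof.
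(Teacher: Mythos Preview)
This theorem is not proved in the present paper; it is quoted from Mubayi and Verstra\"ete~\cite{MV2009} as a black box in the preliminaries, so there is no proof here against which to compare your attempt.

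That aside, what you have written is a strategy outline rather than a proof, and you acknowledge as much. Both regimes have genuine gaps. In the spread regime you state only that you ``would run a counting/alteration argument to locate a small spread $2$-regular configuration directly''; no argument is given, no target configuration is named, and it is not clear why a maximum-degree bound alone forces one. In the near-star regime you correctly isolate the crux as ``exactly the hard supersaturation statement'' and then do not prove it; observing that this is where the difficulty ``bites hardest'' describes the obstacle without overcoming it. Neither branch actually reaches a contradiction. One side correction: your assertion that the smallest $2$-regular $k$-uniform hypergraphs span $2k$ vertices holds only for odd $k$; for even $k$ there exist $2$-regular $k$-graphs on $3k/2$ vertices (three edges pairwise sharing $k/2$ vertices), and this distinction is precisely what makes the even case in~\cite{MV2009} easier than the odd case treated in the present paper.
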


\begin{theorem}\cite{MV2009} \label{thm:stability}
For given $\epsilon>0$ and $k\in \mathbb{N}$, there exists $n_1=n_1(k,\epsilon)$ such that the following holds for all $n\geq n_1$.
If $H$ is an $n$-vertex $k$-uniform hypergraph with no 2-regular subgraphs with $|H|\geq \binom{n-1}{k-1}$, then $H$ contains a vertex $v$ with $d_H(v)\geq (1-\epsilon)\binom{n-1}{k-1}$.
\end{theorem}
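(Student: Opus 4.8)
The plan is to argue by contradiction: assume that every vertex $x$ satisfies $d_H(x) < (1-\epsilon)\binom{n-1}{k-1}$, and use this together with the hypothesis $|H| \ge \binom{n-1}{k-1}$ and the asymptotic upper bound of Theorem~\ref{thm:asymptotics} (so $|H| = (1+o(1))\binom{n-1}{k-1}$) to locate a $2$-regular subgraph, a contradiction. The cheapest witness for odd $k$ is a generalized $4$-cycle: four distinct edges $A,B,C,D$ on a common $2k$-set with $A \cup B = C \cup D$ and $A\cap B = C\cap D = \emptyset$. Such a configuration is exactly a pair of disjoint $(k-1)$-sets $S,T$ together with two vertices $x,y\notin S\cup T$ for which $S\cup x,\,S\cup y,\,T\cup x,\,T\cup y$ are all edges; equivalently, two disjoint $(k-1)$-sets whose neighbourhoods $N_H(S)$ and $N_H(T)$ share at least two vertices outside $S\cup T$. (This is indeed the smallest $2$-regular subgraph for odd $k$, since a $t$-edge $2$-regular subgraph spans exactly $tk/2$ vertices, whence $t$ is even, and $t=2$ forces $A=B$.) So the first reduction is to find disjoint $(k-1)$-sets $S,T$ with $|N_H(S)\cap N_H(T)\setminus(S\cup T)|\ge 2$.

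A clean local obstruction is available: if no generalized $4$-cycle uses a given pair $\{x,y\}$ as its two completing vertices, then the family $\{S: S\cup x,\,S\cup y\in H\}$ of $(k-1)$-sets is intersecting, so by the Erd\H{o}s--Ko--Rado theorem it has at most $\binom{n-3}{k-2}$ members for large $n$. Summing over pairs bounds the number of cherries $\sum_{S}\binom{d_H(S)}{2}$ from above by roughly $\binom{n}{2}\binom{n-3}{k-2}$. The difficulty is that the matching convexity lower bound falls short by a factor of order $n/k$, because the average codegree of a $(k-1)$-set is only $\Theta(k)$; thus a single global double count cannot close. This gap is unavoidable, since a full $k$-star already realizes $\binom{n-1}{k-1}$ edges with no generalized $4$-cycle, so any valid argument must use the spread-out hypothesis precisely to exclude a star. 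The main engine is therefore the delta-system (sunflower) method: using Theorem~\ref{thm:asymptotics} to guarantee $|H|=\Omega(n^{k-1})$, one passes to a subfamily $H^*$ with $|H^*|\ge c(k)|H|$ that is homogeneous, in the sense that its edges share a common intersection pattern carried by a bounded family of kernels. In such a family one runs a dichotomy: either a kernel of size $1$, i.e.\ a single vertex, lies in a $(1-\epsilon)$-fraction of $H^*$, forcing a vertex of the required high degree and contradicting the assumption; or else the petals attached to the kernels are numerous and genuinely disjoint, and two petals sharing two completing vertices supply the disjoint $(k-1)$-sets $S,T$ demanded above.

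The hard part will be the second horn of this dichotomy for odd $k$. Because the minimal $2$-regular subgraph needs four edges on $2k$ vertices---there is no three-edge generalized triangle as in the even case underlying Theorem~\ref{thm:MV}---the homogeneous structure one extracts must be rich enough to realize four distinct edges with every vertex of degree exactly $2$, and one must certify that the two stems $S,T$ are genuinely disjoint (otherwise a shared vertex acquires degree $4$ and the subgraph is not $2$-regular). Since the relevant counts sit exactly at the threshold where the number of candidate $2k$-sets matches the number of edge-pairs, the constants in the sunflower extraction and in the Erd\H{o}s--Ko--Rado bound must be tracked quantitatively and then played off against the assumed degree ceiling $(1-\epsilon)\binom{n-1}{k-1}$; and in the most spread-out, design-like regime, where generalized $4$-cycles might not be directly forced, one must instead exhibit a longer $2$-regular subgraph inside the homogeneous family. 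Managing this borderline book-keeping, rather than any single inequality, is where I expect the real work to lie.
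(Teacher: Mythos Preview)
The paper does not prove Theorem~\ref{thm:stability}; it is quoted verbatim from Mubayi and Verstra\"{e}te~\cite{MV2009} and used as a black box. So there is no ``paper's own proof'' to compare against here.

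As for your proposal, it is a plan rather than a proof: you correctly diagnose why the naive double count of cherries fails (the full $k$-star sits exactly at the threshold, so the spread-out hypothesis must be used to rule out a star), and you propose the delta-system/sunflower method as the engine. That is indeed the standard route, and it is in the spirit of what \cite{MV2009} does. But the substance is missing. Two concrete gaps:
\begin{itemize}
\item In the first horn of your dichotomy you say that a kernel of size~$1$ covering a $(1-\epsilon)$-fraction of the homogeneous subfamily $H^*$ ``forces a vertex of the required high degree''. Since you only guarantee $|H^*|\ge c(k)|H|$ for some small $c(k)$, this yields $d_H(v)\ge (1-\epsilon)c(k)\binom{n-1}{k-1}$, which is far from $(1-\epsilon)\binom{n-1}{k-1}$. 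Bridging this loss is exactly the stability content of the theorem, and your outline does not address it.
\item In the second horn you acknowledge that a generalized $4$-cycle may not be directly forced in the ``design-like'' regime and that ``one must instead exhibit a longer $2$-regular subgraph'', but you do not say how. The closing sentence explicitly defers the ``borderline book-keeping'' to future work.
\end{itemize}
So the write-up identifies the right difficulties and the right toolkit, but it does not actually carry out either branch of the argument; as it stands it is an outline, not a proof.
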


We use the following result of Frankl~\cite[Theorem 10.3]{Frankl87_survey}.
\begin{theorem}\label{ob: matching}
For integers $t\ge 1$ and $n\geq 2k$, if an $n$-vertex $k$-uniform hypergraph $H$ has more than $t{{n-1}\choose {k-1}}$ edges, then $H$ has a matching of size $t+1$.
\end{theorem}

We also use the following result of Balogh, Bohman and Mubayi \cite{BBM}.
If an intersecting $k$-uniform hypergraph is a subgraph of a full $k$-star, then it is called \emph{trivial}, otherwise \emph{non-trivial}.
Moreover, we say that a $k$-uniform hypergraph $H$ is covered by a set $X\subseteq \binom{V(H)}{2}$ of pairs of vertices of $H$ if for every hyperedge $e$ of $H$, there is a pair $\{x, y\} \in X$ such that $\{x,y\}\subseteq e$. 

\begin{lemma}\cite{BBM} \label{lem:BBM}
Let $H$ be a non-trivial intersecting $k$-uniform hypergraph.
Then $H$ can be covered by at most $k^2-k+1$ pairs of vertices.
\end{lemma}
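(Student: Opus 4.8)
The plan is to produce, for a non-trivial intersecting $k$-uniform hypergraph $H$, an explicit set $X$ of at most $k^2-k+1$ pairs that covers every edge. I would fix one edge $A=\{a_1,\dots,a_k\}\in H$ and split into two cases according to how the remaining edges meet $A$. If every edge of $H$ other than $A$ meets $A$ in at least two vertices, then the $\binom{k}{2}$ pairs inside $A$ already form a cover (each such edge contains a pair of $A$, and so does $A$), and since $\binom{k}{2}=\tfrac{k^2-k}{2}\le k^2-k+1$ we are done. Hence the substantive case is when some edge $B\in H$ satisfies $|A\cap B|=1$; relabel so that $A\cap B=\{a_1\}$.

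In this case I would assign to each edge $e$ a vertex $\phi(e)\in e\cap A$, choosing $\phi(e)=a_1$ whenever $a_1\in e$ and otherwise any vertex of $e\cap A$. For $p\in A$ set $H_p=\{e\in H:\phi(e)=p\}$ and let $L_p=\{e\setminus\{p\}:e\in H_p\}$, a $(k-1)$-uniform family. To cover $H_p$ it suffices to pick a transversal $Y_p$ of $L_p$ (a vertex set meeting every member of $L_p$) and throw the pairs $\{p,y\}$ with $y\in Y_p$ into $X$: each $e\in H_p$ then contains some $\{p,y\}\subseteq e$. Thus $|X|\le\sum_{p\in A}|Y_p|$, and the whole task reduces to bounding these transversals.

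The key point is that a transversal of $L_p$ can be read off from any edge avoiding $p$. Because $\phi$ gives priority to $a_1$, every edge of $H_{a_i}$ with $i\ge2$ avoids $a_1$; as such an edge meets $B$ and $A\cap B=\{a_1\}$, it must meet $B\setminus\{a_1\}$. Hence the single $(k-1)$-set $B\setminus\{a_1\}$ is simultaneously a transversal of $L_{a_2},\dots,L_{a_k}$, giving $|Y_{a_i}|\le k-1$ for all $i\ge2$. For $p=a_1$ I would invoke non-triviality: $H$ is not contained in the full star at $a_1$, so there is an edge $B'$ with $a_1\notin B'$; since every edge of $H_{a_1}$ meets $B'$ away from $a_1$, the $k$-set $B'$ is a transversal of $L_{a_1}$, giving $|Y_{a_1}|\le k$. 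Summing yields $|X|\le k+(k-1)(k-1)=k^2-k+1$, exactly as claimed.

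The step that needs the most care is the priority rule in the definition of $\phi$: it is precisely what guarantees that the edges counted at $a_i$ (for $i\ge2$) genuinely avoid $a_1$, so that one $(k-1)$-set $B\setminus\{a_1\}$ handles all of $L_{a_2},\dots,L_{a_k}$ at once — without it, an edge through both $a_1$ and $a_i$ could meet $B$ only at $a_1$ and escape, inflating the count. The only other subtlety is securing the two auxiliary edges: $B$ comes from the case hypothesis and $B'$ comes from non-triviality, and it is exactly here (and nowhere else) that the hypothesis that $H$ is non-trivial is used. I expect the main obstacle to be organizing the assignment $\phi$ and the two witness edges so that the transversal counts combine to the exact value $k+(k-1)^2$; it is worth noting that the estimate is sharp, since the projective plane of order $k-1$ (when it exists) is a non-trivial intersecting $k$-uniform hypergraph in which every pair lies in a unique edge, so it requires all $k^2-k+1$ pairs.
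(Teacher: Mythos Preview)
The paper does not supply its own proof of this lemma; it is quoted as a black box from \cite{BBM}. Your argument is correct and complete: the priority rule in $\phi$ guarantees that for $i\ge 2$ every edge assigned to $a_i$ avoids $a_1$ and hence meets $B\setminus\{a_1\}$ at a vertex other than $a_i$ (since $a_i\notin B$), while non-triviality furnishes the edge $B'\not\ni a_1$ that serves as a transversal for $L_{a_1}$, yielding $k+(k-1)^2=k^2-k+1$ covering pairs. Your sharpness observation via the projective plane of order $k-1$ is also correct. This is in fact essentially the argument given in \cite{BBM}, so nothing further is needed.
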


\section{Proof of Theorem~\ref{thm:main}}
Let $k\ge 3$ be an odd integer. Let $\epsilon := \epsilon(k)>0$ be sufficiently small and let $n(k,\epsilon)$ be a sufficiently large integer. 
For $n\geq n(k,\epsilon)$, let $H$ be an $n$-vertex $k$-uniform hypergraph with no $2$-regular subgraphs. By removing edges if necessary, we may assume that
\begin{equation}\label{eq: |H| assumption}
|H|=\binom{n-1}{k-1} + \left\lfloor \frac{n-1}k \right\rfloor.
\end{equation}
 To prove Theorem~\ref{thm:main}, it is enough to show that $H$ contains a full $k$-star, because a full $k$-star with two additional intersecting edges always gives a $2$-regular subgraph. To derive a contradiction, we assume that $H$ does not contain any full $k$-star. Since $n$ is sufficiently large, Theorem~\ref{thm:stability} implies that there is a vertex $v\in V(H)$ such that
$d_{H}(v) \geq \binom{n-1}{k-1} - \epsilon^3 n^{k-1}$. 
Let $V':=V(H)\setminus \{v\}$, $H^*:=H[V']$ and $\tilde{H}:= \{e \setminus \{v\}: |e|=k, \, v\in e\notin H\}$.
Note that any $(k-1)$-set $A\subseteq V'$ with $A\notin \tilde{H}$ satisfies that $A\cup \{v\} \in H$.
Let
\begin{equation}\label{eq: x}
x:= |\tilde{H}|= \binom{n-1}{k-1} - d_H(v) \le \epsilon^3 n^{k-1}.
\end{equation}
Since $H$ does not contain a full $k$-star with center $v$, we have $x\geq 1$.
%Let $H^*=H - \{v\}$, $\tilde{H}:= \{e \setminus \{v\}: e\notin H, v\in e\}$. Note that any $(k-1)$-set $A\subseteq V'$ with $A\notin \tilde{H}$ satisfies that $A\cup \{v\} \in H$.
Then \eqref{eq: |H| assumption} and \eqref{eq: x} imply that 
\begin{equation}\label{eq: size of H*}
|H^*| = x + \left\lfloor \frac{n-1}k \right\rfloor . %\enspace \text{ and } \enspace |\tilde{H}|=x.
\end{equation}

%New: outline of the proof

Important idea for the proof is that a pair of two intersecting edges in $H^*$ ensures $\tilde{H}$ to contain more $(k-1)$-sets (Claim~\ref{cl: produce f}). 
Since $x>0$, there exists a pair of intersecting edges in $H^*$ and Claim~\ref{cl: produce f} implies that value of $x=|\tilde{H}|$ is larger. However, by \eqref{eq: size of H*}, larger value of $x$ guarantees more pairs of intersecting edges in $H^*$ which again implies the value of $x$ is larger. This circulation of logic gives a contradiction as $x$ cannot be too large by \eqref{eq: x}.

To turn this idea into a mathematical proof, we need to prove some technical claims.
Here, we give a brief outline of our proof.
We start with some simple but useful claims (Subsection 3.1), and in particular, we show that $x\ge n-2k+1$ (Claim~\ref{clm:n}).
Thus together with~\eqref{eq: x}, we may assume that there exists an integer $2\le \ell\le k-1$ such that 
\begin{equation*}%\label{eq: x1}
\epsilon^3 n^{\ell-1} \le x\le \epsilon^3 n^{\ell}.
\end{equation*}
%and $2\leq \ell \leq k-1$.
We next find pairwise disjoint $(\ell-1)$-sets $S_1, S_2,\dots, S_{2k} \subseteq V'$ such that $d_{\tilde{H}}(S_i) \leq {\binom{k}{\ell-1} x}/{\binom{n-1}{\ell-1}}$ for $i\in [2k]$, which play an important role in the proof.
Let $\mathcal{T}:=\bigcup_{i=1}^{2k} N_{\tilde{H}}(S_i)$ be a collection of $(k-\ell)$-sets in $V'$.
Let $H_1:=\{e\in H^* : \exists T\in \T \text{ such that } T\subset e\}$ and let $H_0:=H^*\setminus H_1$. 
Our goal is to show that $|H_1|\le \e x$ (Subsection 3.2) and $|H_0| < (1-\e)x + \left\lfloor \frac{n-1}k \right\rfloor$ (Subsection 3.3), which together imply that $|H^*| = |H_1|+|H_0| < x+\left\lfloor \frac{n-1}k \right\rfloor$, contradicting~\eqref{eq: size of H*}.
In fact, the technical parts are Subsections 3.2 and 3.3, in which the essential argument is some clever double counting also used in~\cite{MV2009}.
However, as mentioned in Section 1, our case is more complicated than that in~\cite{MV2009}, so we have to proceed a more careful analysis (including introducing $\ell$ and $\mathcal{T}$) and use some new tricks (e.g. analyzing the intersecting property of certain family and utilizing Lemma~\ref{lem:BBM}).

\subsection{Preparation}
First we prove the following easy claim.

\begin{claim}\label{cl: produce f}
Assume we have $e_1,e_2\in H^*$, $A\subseteq V'$ and $\{ S,S' \}$ such that
\begin{itemize}
\item $A\cap (e_1\cup e_2) =\emptyset$,
\item $|A|=|e_1\cap e_2|-1$,
\item $\{S,S'\}$ is an equipartition of $e_1\triangle e_2$.
\end{itemize}
Then either $A\cup S \in \tilde{H}$ or $A\cup S' \in \tilde{H}$.
\end{claim}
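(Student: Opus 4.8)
The plan is to argue by contradiction: I will show that if neither $A\cup S$ nor $A\cup S'$ belongs to $\tilde H$, then $H$ contains a $2$-regular subgraph on four edges, contradicting the standing hypothesis on $H$. The reformulation I use is the one noted right after the definition of $\tilde H$, namely that a $(k-1)$-set $B\subseteq V'$ fails to lie in $\tilde H$ exactly when $B\cup\{v\}\in H$. So negating the conclusion means that both $f_1:=(A\cup S)\cup\{v\}$ and $f_2:=(A\cup S')\cup\{v\}$ are edges of $H$.

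First I would record the size bookkeeping that makes this well posed. Setting $t:=|e_1\cap e_2|$, the fact that $e_1,e_2$ are $k$-sets gives $|e_1\triangle e_2|=2(k-t)$, so the equipartition forces $|S|=|S'|=k-t$, while the second hypothesis gives $|A|=t-1$. Since $A$ is disjoint from $e_1\cup e_2\supseteq S\cup S'$, the relevant unions are disjoint and $|A\cup S|=|A\cup S'|=k-1$. Hence $A\cup S$ and $A\cup S'$ are genuine $(k-1)$-subsets of $V'$, so they are legitimate candidates for membership in $\tilde H$ and $f_1,f_2$ are honest $k$-sets through $v$.

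The core of the proof is a degree count for the four edges $e_1,e_2,f_1,f_2$. I would split $V(H)$ into the blocks $\{v\}$, $A$, $e_1\cap e_2$ and $e_1\triangle e_2$ (plus untouched vertices) and verify that each contributes degree exactly $2$: both $v$ and every vertex of $A$ lie only in $f_1$ and $f_2$; every vertex of $e_1\cap e_2$ lies only in $e_1$ and $e_2$, since it avoids $A$ and avoids $S\cup S'$. The decisive block is $e_1\triangle e_2$: such a vertex lies in exactly one of $e_1,e_2$, and because $\{S,S'\}$ partitions $e_1\triangle e_2$ it also lies in exactly one of $S,S'$, hence in exactly one of $f_1,f_2$, giving total degree $2$. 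This is precisely where the equipartition hypothesis is indispensable, and the step that needs the most care: without a genuine bipartition of $e_1\triangle e_2$ a vertex there could end up in both or neither of $f_1,f_2$ and the count would break.

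Finally I would check distinctness, so that $\{e_1,e_2,f_1,f_2\}$ is a genuine $2$-regular subgraph and not a degenerate multiset. As $e_1,e_2\subseteq V'$ avoid $v$ while $f_1,f_2$ contain it, $\{f_1,f_2\}$ is disjoint from $\{e_1,e_2\}$; and $f_1\ne f_2$ since $S\ne S'$, which holds because $\{S,S'\}$ partitions the nonempty set $e_1\triangle e_2$ (in the application $e_1,e_2$ form a pair of distinct intersecting edges, so $e_1\triangle e_2\ne\emptyset$). Exhibiting a $2$-regular subgraph then contradicts the assumption on $H$, proving the claim. I do not anticipate a real obstacle here: the only delicate points are the disjointness/size bookkeeping and the equipartition-driven degree count on $e_1\triangle e_2$.
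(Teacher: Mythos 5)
Your proposal is correct and matches the paper's proof: the paper likewise argues that if both $A\cup S$ and $A\cup S'$ fail to lie in $\tilde{H}$, then $e_1$, $e_2$, $A\cup S\cup\{v\}$ and $A\cup S'\cup\{v\}$ form a $2$-regular subgraph of $H$, a contradiction. You merely spell out the degree count and the distinctness/size bookkeeping that the paper leaves implicit, which is a faithful elaboration rather than a different route.
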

\begin{proof}
If both $(k-1)$-sets $A\cup S$ and $A\cup S'$ are not in $\tilde{H}$, then 
$$e_1, e_2, A\cup S\cup \{v\} \text{ and } A\cup S'\cup \{v\}$$ form a $2$-regular subgraph of $H$, a contradiction. 
%Thus either $A\cup S \in \tilde{H}$ or $A\cup S' \in \tilde{H}$.
\end{proof}

Now we prove the following two claims regarding lower bounds on $x$.

\begin{claim}\label{cl: lower bound on H*}
Let $t \in [k-1]$. If $H^*$ contains two edges $e_1,e_2$ such that $|e_1\cap e_2|=t$, then $$x \ge \frac{1}{2}\binom{2k-2t}{k-t}\binom{n-2k+t-1}{t-1}.$$
\end{claim}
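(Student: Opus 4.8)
The plan is to treat Claim~\ref{cl: produce f} as a machine that, starting from the single pair $e_1,e_2$, manufactures many distinct members of $\tilde H$, and then simply to count them. Write $t=|e_1\cap e_2|$, so that $|e_1\triangle e_2|=2(k-t)$ and $|e_1\cup e_2|=2k-t$. First I would assemble the ingredients required by Claim~\ref{cl: produce f}: a $(t-1)$-set $A\subseteq V'$ disjoint from $e_1\cup e_2$, together with an equipartition $\{S,S'\}$ of $e_1\triangle e_2$ into two sets of size $k-t$. Since $|V'\setminus(e_1\cup e_2)|=n-2k+t-1$, there are $\binom{n-2k+t-1}{t-1}$ choices for $A$, and there are $\frac12\binom{2k-2t}{k-t}$ unordered equipartitions $\{S,S'\}$. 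For each such choice, Claim~\ref{cl: produce f} guarantees that at least one of the two $(k-1)$-sets $A\cup S$, $A\cup S'$ lies in $\tilde H$.

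The crux is to show that these guaranteed members of $\tilde H$ are pairwise distinct, so that the number of input data transfers directly into a lower bound on $x=|\tilde H|$. Here I would exploit the disjointness built into the construction: every produced set has the shape $A\cup S$ with $A\subseteq V'\setminus(e_1\cup e_2)$ and $S\subseteq e_1\triangle e_2\subseteq e_1\cup e_2$. Consequently, from the set $F=A\cup S$ alone one recovers $A=F\setminus(e_1\cup e_2)$ and $S=F\cap(e_1\cup e_2)$, so the map $(A,S)\mapsto A\cup S$ is injective. In particular two distinct input data $(A,\{S,S'\})$ can never yield the same element of $\tilde H$, because an equal output would force equal $A$ and equal half $S$. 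Hence $\tilde H$ contains a distinct set for each of the $\binom{n-2k+t-1}{t-1}\cdot\frac12\binom{2k-2t}{k-t}$ input data, which is exactly the claimed bound.

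The only step requiring genuine care — and the main obstacle — is this injectivity/no-overcounting argument; once it is established the inequality is immediate. I would also check the boundary case $t=1$ (where $A=\emptyset$ and $\binom{n-2k}{0}=1$, so the bound reads $x\ge\frac12\binom{2k-2}{k-1}$) and note that $n\ge 2k$, which holds since $n$ is large, is enough for every binomial coefficient to be meaningful. No hypothesis on $H$ beyond the absence of $2$-regular subgraphs is used, since that is already encapsulated in Claim~\ref{cl: produce f}.
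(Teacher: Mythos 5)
Your proposal is correct and matches the paper's proof essentially verbatim: both apply Claim~\ref{cl: produce f} to each pair $(A,\{S,S'\})$ and multiply the $\binom{n-2k+t-1}{t-1}$ choices of $A$ by the $\frac{1}{2}\binom{2k-2t}{k-t}$ equipartitions. The only difference is that you spell out the injectivity (recovering $A=F\setminus(e_1\cup e_2)$ and $S=F\cap(e_1\cup e_2)$ from the output $F$), which the paper merely asserts as ``distinct choices give distinct $(k-1)$-sets.''
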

\begin{proof}
Suppose $e_1, e_2\in H^*$ such that $|e_1\cap e_2| = t$.
Consider a set $A \in \binom{V'\setminus(e_1\cup e_2)}{t-1}$ and an equipartition $\{S,S'\}$ of $e_1\triangle e_2$. For each $A$ and $\{S,S'\}$, Claim~\ref{cl: produce f} implies that $A\cup S \in \tilde{H}$ or $A\cup S' \in \tilde{H}$. Moreover, distinct choices of $(A,\{S,S'\})$ give us distinct $(k-1)$-sets in $\tilde{H}$.

Since there are $\binom{n-2k+t-1}{t-1}$ distinct choices of $A$ and  $\frac{1}{2}\binom{2k-2t}{k-t}$ distinct choices of $\{S,S'\}$, we have $x = |\tilde{H}| \geq \frac{1}{2}\binom{2k-2t}{k-t}\binom{n-2k +t -1}{t-1}$.
\end{proof}

\begin{claim}\label{clm:n}
The hypergraph $H^*$ contains two edges $e_1,e_2$ such that $|e_1\cap e_2| \geq 2$. Moreover, $x \ge n-2k+1$.
\end{claim}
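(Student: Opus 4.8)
The plan is to prove the two assertions in sequence, observing first that they are linked: once we produce a pair $e_1,e_2\in H^*$ with $t:=|e_1\cap e_2|\ge 2$, the bound $x\ge n-2k+1$ drops out of Claim~\ref{cl: lower bound on H*}. Indeed, since $e_1\ne e_2$ we have $t\le k-1$, so $k-t\ge 1$ and the first factor satisfies $\tfrac12\binom{2k-2t}{k-t}\ge\tfrac12\binom{2}{1}=1$, while $t-1\ge 1$ together with $n$ large gives $\binom{n-2k+t-1}{t-1}\ge n-2k+t-1\ge n-2k+1$. Hence
$$x\ \ge\ \tfrac12\binom{2k-2t}{k-t}\binom{n-2k+t-1}{t-1}\ \ge\ n-2k+1 .$$
So the entire content lies in the first assertion, and I would spend essentially all the effort there.

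For the existence of a pair meeting in at least two vertices I would argue by contradiction, assuming that every two edges of $H^*$ share at most one vertex. First I would note that $H^*$ cannot be a matching: a matching on $V'=V(H)\setminus\{v\}$ has at most $\lfloor\frac{n-1}{k}\rfloor$ edges, whereas $|H^*|=x+\lfloor\frac{n-1}{k}\rfloor$ with $x\ge 1$ by \eqref{eq: size of H*}. Thus some vertex $w$ lies in two edges $e_1,e_2$, and under our assumption $e_1\cap e_2=\{w\}$, a pair meeting in \emph{exactly} one vertex. The natural way to exploit this is to try to complete $e_1,e_2$ to a $2$-regular subgraph of $H$ through $v$: pairing them with the two star edges $S\cup\{v\}$ and $S'\cup\{v\}$, where $\{S,S'\}$ is an equipartition of $e_1\triangle e_2$, is exactly the configuration of Claim~\ref{cl: produce f}, which forces one of these two star edges into $\tilde H$ and so blocks the length-two completion. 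To reach a contradiction I would therefore enlarge the configuration, attaching further almost-disjoint edges of $H^*$ (which exist since $|H^*|$ exceeds the matching number) so that the degrees outside $v$ can still be balanced by exactly two edges through $v$; using that only $x\le\epsilon^3 n^{k-1}$ of the $(k-1)$-subsets of $V'$ lie in $\tilde H$ by \eqref{eq: x}, I would then select the two required star edges outside $\tilde H$, producing an actual $2$-regular subgraph of $H$ — the desired contradiction.

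The main obstacle is precisely this upgrade from a one-vertex to a two-vertex intersection, i.e. ruling out the linear case, and it is here that the oddness of $k$ bites. Two edges through $v$ contribute exactly $2(k-1)$ incidences to the vertices of $V'$, and matching these against the degree-one vertices created by the attached $H^*$-edges is subject to rigid parity constraints when $k$ is odd; this is the same phenomenon that produces the extra term $\lfloor\frac{n-1}{k}\rfloor$ in the extremal size and that makes the odd case genuinely harder than the even case treated in \cite{MV2009}. The quantitative heart of the argument is that the number of candidate pairs of star edges is of order $n^{k-1}$, vastly exceeding $x$, so the smallness of $\tilde H$ guarantees an admissible present pair; executing this selection while respecting the parity bookkeeping (and keeping all chosen edges distinct) is the step I expect to require the most care, and it is the reason a bare application of Claim~\ref{cl: produce f} or Theorem~\ref{ob: matching} to a single intersecting pair is insufficient.
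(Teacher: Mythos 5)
Your handling of the ``moreover'' part is correct and is exactly the paper's route: once a pair with $t=|e_1\cap e_2|\ge 2$ exists, Claim~\ref{cl: lower bound on H*} gives $x\ge n-2k+1$. The genuine gap is in the first assertion, where your sketch rests on a quantitative claim that is false in precisely the case you must rule out. If every two edges of $H^*$ meet in at most one vertex, then for a fixed pair $e_1,e_2$ with $e_1\cap e_2=\{w\}$ the set $A$ of Claim~\ref{cl: produce f} is empty, so the candidate star edges are exactly the halves of equipartitions of $e_1\triangle e_2$: there are only $\tfrac12\binom{2k-2}{k-1}=O_k(1)$ candidates, all lying inside one fixed $(2k-2)$-set. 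The bound $x\le\epsilon^3 n^{k-1}$ from \eqref{eq: x} says nothing about the trace of $\tilde{H}$ on a bounded vertex set, so $\tilde{H}$ may well contain \emph{all} of these candidates; there is no ``$n^{k-1}$ candidate pairs'' count here. That count appears only when $|e_1\cap e_2|=t\ge 2$, because only then does the free set $A$ range over $\binom{n-2k+t-1}{t-1}$ choices --- i.e., exactly in the situation you are trying to reach, not the one you must exclude. The same objection kills the enlarged configurations: once the attached $H^*$-edges are fixed, the two star edges through $v$ are determined up to a bounded number of partitions of the degree-one vertex set. Worse, your own parity bookkeeping shows the enlargement is structurally out of reach: attaching $m$ edges of $H^*$ (pairwise intersections $\le 1$) with $s$ degree-two vertices leaves $mk-2s$ vertices of degree one, which must equal $|f_1\triangle f_2|\le 2k-2$; for odd $k$ this forces $m$ even and $s\ge (m-2)k/2+1$, while $s\le\binom{m}{2}$, so $m\ge k-1$ edges arranged as a dense partial linear system would be needed --- and nothing in the hypotheses produces such a configuration in $H^*$.

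The idea you are missing is the paper's global degree count, which avoids configurations beyond the four-edge one entirely. Under the contradiction hypothesis, for each $u\in V'$ the link $N_{H^*}(u)$ consists of pairwise \emph{disjoint} $(k-1)$-sets, and at most one of them can avoid $\tilde{H}$: two sets $S,S'\in N_{H^*}(u)\setminus\tilde{H}$ would make $S\cup\{u\},\,S'\cup\{u\},\,S\cup\{v\},\,S'\cup\{v\}$ a $2$-regular subgraph. Moreover, the hypothesis makes the links of distinct vertices disjoint as families, so summing over $u\in V'$,
\begin{equation*}
x=|\tilde{H}|\ \ge\ \sum_{u\in V'}\bigl(d_{H^*}(u)-1\bigr)\ =\ k|H^*|-(n-1)\ \stackrel{\eqref{eq: size of H*}}{\ge}\ kx-(k-1),
\end{equation*}
whence $x\le 1$ as $k\ge 3$. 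On the other hand $x\ge 1$ and \eqref{eq: size of H*} force some intersecting pair in $H^*$, and Claim~\ref{cl: lower bound on H*} with $t=1$ yields $x\ge\tfrac12\binom{2k-2}{k-1}\ge 3$, a contradiction --- so no parity analysis or larger $2$-regular configuration is required.
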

\begin{proof}
Assume $H^*$ does not contain such two edges. Then for any $u\in V'$ and $S,S' \in N_{H^*}(u)$, we have $S\cap S'= \emptyset$. 
If there are two $(k-1)$-sets $S,S' \in N_{H^*}(u)$ such that $S,S'\notin \tilde{H}$, then
$$ S\cup \{u\}, S'\cup \{u\}, S\cup \{v\} \text{ and } S'\cup \{v\}$$
form a $2$-regular subgraph of $H$, a contradiction. 
Thus for any $u\in V'$, we have $|N_{H^*}(u)\cap \tilde{H}| \geq |N_{H^*}(u)|-1$. 
Moreover, by our assumption, we have $N_{H^*}(u)\cap N_{H^*}(u')=\emptyset$ for any distinct $u, u'\in V'$.
Thus
$$ x = |\tilde{H}| \geq \sum_{u \in V'} |N_{H^*}(u)\cap \tilde{H}|  \ge \sum_{u \in V'} (d_{H^*}(u)-1) = k |H^*| - (n-1) \stackrel{\eqref{eq: size of H*}}{\geq} k x - (k-1).$$
Since $k\geq 3$, we get $x\leq 1$.
However, the assumption that $x\geq 1$ and \eqref{eq: size of H*} imply that there are two edges $e_1,e_2 \in H^*$ with $|e_1\cap e_2|\geq 1$. So by Claim~\ref{cl: lower bound on H*}, we have $x \geq \frac{1}{2}\binom{2k-2}{k-1}\binom{n-2k}{0} \geq 3,$ a contradiction. Thus $H^*$ contains two edges $e_1,e_2$ with $|e_1\cap e_2|\geq 2.$ Hence Claim~\ref{cl: lower bound on H*} implies that $x\geq \frac{1}{2}\binom{2k-4}{k-2} \binom{n-2k +1}{1} \geq n-2k+1$.\end{proof}

By \eqref{eq: x} and Claim~\ref{clm:n}, there exists an integer $\ell$ such that 
\begin{equation}\label{eq: x1}
\epsilon^3 n^{\ell-1} \le x\le \epsilon^3 n^{\ell}
\end{equation}
and $2\leq \ell \leq k-1$. Throughout the rest of the paper, $\ell$ denotes such integer satisfying \eqref{eq: x1}.

%JH. Somehow this claim is not used anywhere, so I deleted it.
%We have the following quick observation for vertex sets of size at least $\ell+1$.
%\begin{claim}\label{cl: >ell degree}
%For any set $B$ of size at least $\ell+1$, $d_{H^*}(B)\leq 1$.
%\end{claim}
%\begin{proof}
%If $d_{H^*}(B)>1$, then there are two edges $e_1,e_2$ such that $|e_1\cap e_2|\geq |B| \geq \ell+1$. Then Claim~\ref{cl: lower bound on H*} implies that 
%$$x \ge \frac{1}{2}\binom{2k-2\ell-2}{k-\ell-1}\binom{n-2k+\ell}{\ell} > \epsilon^3 n^{\ell},$$
%which contradicts \eqref{eq: x1}. Thus $d_{H^*}(B)\leq 1$.
%\end{proof}

The following claim finds $2k$ pairwise disjoint $(\ell-1)$-sets which have low degree in $\tilde{H}$.
\begin{claim}\label{clm:1}
There are pairwise disjoint $(\ell-1)$-sets $S_1, S_2,\dots, S_{2k} \subseteq V'$ such that $d_{\tilde{H}}(S_i) \leq {\binom{k}{\ell-1} x}/{\binom{n-1}{\ell-1}}$ for $i\in [2k]$.
\end{claim}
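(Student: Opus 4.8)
The plan is to prove Claim~\ref{clm:1} by a straightforward averaging (double-counting) argument to locate many low-degree $(\ell-1)$-sets, followed by a greedy extraction of $2k$ pairwise disjoint ones. The key observation is that $\tilde{H}$ is a $(k-1)$-uniform hypergraph on $V'$ with exactly $x$ edges, so the total degree of all $(\ell-1)$-subsets is controlled.

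First I would count incidences between edges of $\tilde{H}$ and $(\ell-1)$-sets: since each edge of $\tilde{H}$ has size $k-1$ and hence contains exactly $\binom{k-1}{\ell-1}$ subsets of size $\ell-1$, we get
$$\sum_{S\in \binom{V'}{\ell-1}} d_{\tilde{H}}(S) = \binom{k-1}{\ell-1}\,|\tilde{H}| = \binom{k-1}{\ell-1} x.$$
Set $\tau := \binom{k}{\ell-1} x/\binom{n-1}{\ell-1}$, the threshold in the claim. By Markov's inequality, the number of $(\ell-1)$-sets $S$ with $d_{\tilde{H}}(S) > \tau$ is less than
$$\frac{\binom{k-1}{\ell-1} x}{\tau} = \frac{\binom{k-1}{\ell-1}}{\binom{k}{\ell-1}}\binom{n-1}{\ell-1} = \frac{k-\ell+1}{k}\binom{n-1}{\ell-1}.$$
Consequently, at least $\frac{\ell-1}{k}\binom{n-1}{\ell-1}$ of the $(\ell-1)$-sets in $V'$ satisfy $d_{\tilde{H}}(S)\le \tau$; call these \emph{good}. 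Since $\ell\ge 2$ by~\eqref{eq: x1}, the quantity $\frac{\ell-1}{k}\binom{n-1}{\ell-1}$ is of order $n^{\ell-1}$, so there are many good sets. Note that the slack in the bound $\binom{k}{\ell-1}>\binom{k-1}{\ell-1}$ is exactly what makes the threshold lie above the average and guarantees a positive proportion of good sets.

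Finally I would build $S_1,\dots,S_{2k}$ greedily. Suppose $S_1,\dots,S_j$ with $j<2k$ have already been chosen, pairwise disjoint and good; they span at most $2k(\ell-1)$ vertices. The number of $(\ell-1)$-sets meeting $S_1\cup\cdots\cup S_j$ is at most $2k(\ell-1)\binom{n-2}{\ell-2}=O(n^{\ell-2})$, which is negligible compared with the $\Omega(n^{\ell-1})$ good sets once $n$ is large. Hence some good set is disjoint from all previously chosen ones, and we may take it as $S_{j+1}$; iterating $2k$ times (a constant number of steps) yields the required family. The only point requiring care—and the one place where $n$ large is used—is this final order-of-magnitude comparison $n^{\ell-1}\gg n^{\ell-2}$ driving the greedy step; everything else is routine counting.
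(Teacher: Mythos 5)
Your proof is correct, and its first half is exactly the paper's argument: the same double count $\sum_{S} d_{\tilde{H}}(S) = \binom{k-1}{\ell-1}x$ together with Markov's inequality against the threshold $\binom{k}{\ell-1}x/\binom{n-1}{\ell-1}$, yielding that all but at most $\frac{k-\ell+1}{k}\binom{n-1}{\ell-1}$ of the $(\ell-1)$-sets are good (your count $\frac{\ell-1}{k}\binom{n-1}{\ell-1}$ of good sets is in fact slightly sharper than the paper's $\frac{1}{k}\binom{n-1}{\ell-1}$, which uses only $\ell\ge 2$; the difference is immaterial). Where you diverge is the extraction step: the paper observes $|F|>2k\binom{|V'|-1}{\ell-2}$ and invokes Frankl's matching theorem (Theorem~\ref{ob: matching}) to get a matching of size $2k$ inside the good family, whereas you run an elementary greedy argument, noting that the at most $2k(\ell-1)$ vertices already used block only $O(n^{\ell-2})$ of the $\Omega(n^{\ell-1})$ good sets. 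The two routes cost the same: the paper's application of Theorem~\ref{ob: matching} also needs the order-of-magnitude comparison $\frac{1}{k}\binom{n-1}{\ell-1}>2k\binom{n-2}{\ell-2}$, i.e.\ essentially your $n^{\ell-1}\gg n^{\ell-2}$ step, so neither saves on the largeness assumption (harmless here, as $n$ is assumed large throughout). What your version buys is self-containedness --- no external citation is needed for this claim --- while the paper's version delegates the greedy bookkeeping to a standard quoted result; for the constant matching size $2k$ needed here, both are equally adequate.
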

\begin{proof}
Let $F:= \{ S \in \binom{V'}{\ell-1} : d_{\tilde{H}}(S) \leq {\binom{k}{\ell-1} x}/{\binom{n-1}{\ell-1}} \}$ and $F':=\binom{V'}{\ell-1}\setminus F.$ 
So it suffices to find a matching of size $2k$ in $F$. Then 
$$ \binom{k-1}{\ell-1} x = \sum_{S \in \binom{V'}{\ell-1}} d_{\tilde{H}}(S) \geq 0\cdot |F| + \frac{\binom{k}{\ell-1} x}{\binom{n-1}{\ell-1}}|F'|.
$$
So we have
\[
|F'|\le \frac{\binom{k-1}{\ell-1}}{\binom{k}{\ell-1}} \binom{n-1}{\ell-1} = \frac{k-\ell+1}{k}\binom{n-1}{\ell-1}\le \frac{k-1}{k}\binom{n-1}{\ell-1},
\]
as $\ell\ge 2$.
Since $|F|+|F'|=\binom{n-1}{\ell-1}$, we have $|F| \ge \frac{1}{k}\binom{n-1}{\ell-1} > 2k\binom{|V'|-1}{\ell-2}$.
Then by Theorem~\ref{ob: matching},
% and the fact that $\frac{1}{2} \binom{n-1}{\ell-1} > 2k^2\binom{|V'|-1}{\ell-2}$,  
$F$ contains a matching $\{S_1,\dots, S_{2k}\}$ of size $2k$ as desired.
\end{proof}

Let $S_1,\dots, S_{2k}$ be pairwise disjoint $(\ell-1)$-sets as in Claim~\ref{clm:1}.
Let $\mathcal{T}:=\bigcup_{i=1}^{2k} N_{\tilde{H}}(S_i)$ be a collection of $(k-\ell)$-sets in $V'$. So we have
\begin{equation}\label{eq: T}
|\T|\le \sum_{i=1}^{2k} | N_{\tilde{H}}(S_i)| \leq
\frac{2k\binom{k}{\ell-1} x}{\binom{n-1}{\ell-1}}  \le  2k^{k+1} n^{1-\ell} x.
\end{equation}
Note that for any $(k-\ell)$-set $T \notin \mathcal{T}$, we have $T\cup S_i\cup \{v\} \in H$ if $T\cap S_i=\emptyset$. 
Let $W=\bigcup_{T\in \T}T$, then
%Let $W:= \{v \in V': d_{\T}(v) \geq 1\}$, then 
\begin{equation}\label{eq: size T}
|W| \leq (k-\ell)|\T| \stackrel{\eqref{eq: T}}{\leq} 2k^{k+2} n^{1-\ell} x \stackrel{\eqref{eq: x1}}{\leq} \epsilon^2 n.
\end{equation}
Let $H_1:=\{e\in H^* : \exists T\in \T \text{ such that } T\subset e\}$ and let $H_0:=H^*\setminus H_1$. 
%For a set $L\subseteq V'$, let $N^0_{H^*}(L):=\{ e \in H^*: L\subseteq e, \text{ there are no }T\in \T \text{ such that }T\subseteq e\setminus L\}.$ 

We finish this subsection with an essential claim that bounds the degrees of vertex sets of size at most $\ell$ from above.

\begin{claim}\label{clm:Ffree}
Any $\ell$-set $L\subseteq V'$ satisfies that $|N_{H^*}(L)\setminus \T|\le 1$. Moreover, for any set $B\subseteq V'$ with $|B|=b\leq \ell$, it satisfies
$$d_{H_0}(B) \leq \binom{n-1-b}{\ell-b} \enspace \text{and} \enspace d_{H^*}(B)\le \binom{n-1-b}{\ell-b}(1+ |\T|).$$
\end{claim}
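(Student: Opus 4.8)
The plan is to prove the three bounds in sequence, each feeding into the next.

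\textbf{The first statement} (any $\ell$-set $L$ has $|N_{H^*}(L)\setminus \T|\le 1$) is the crux and should be proved first, as the other two follow from it by summing. I would argue by contradiction: suppose some $\ell$-set $L\subseteq V'$ has two distinct sets $T_1, T_2 \in N_{H^*}(L)\setminus \T$. This means $L\cup T_1$ and $L\cup T_2$ are both edges of $H^*$, and neither $T_1$ nor $T_2$ lies in $\T$. Set $e_1 := L\cup T_1$ and $e_2 := L\cup T_2$; these are two edges of $H^*$ whose intersection contains $L$, so $|e_1\cap e_2|\ge \ell$, and the symmetric difference $e_1\triangle e_2$ has even size $2|T_1\setminus T_2| = 2(k-\ell) - 2|T_1\cap T_2|$ (using $|T_1|=|T_2|=k-\ell$). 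The goal is to use Claim~\ref{cl: produce f} together with the low-degree property of the sets $S_1,\dots,S_{2k}$ from Claim~\ref{clm:1} to force the existence of a full $k$-star edge through $v$, contradicting our standing assumption that $H$ contains no full $k$-star. Concretely, I would choose one of the disjoint sets $S_i$ (these have small degree in $\tilde H$, and there are $2k$ of them so one can be chosen disjoint from $e_1\cup e_2$), pad it to a suitable set $A$ of the right size $|e_1\cap e_2|-1$, take an equipartition $\{S,S'\}$ of $e_1\triangle e_2$, and apply Claim~\ref{cl: produce f} to deduce membership in $\tilde H$ of a set that we can show lies in $N_{\tilde H}(S_i)$ — which is controlled by the definition of $\T$. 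The tension is that $T_1, T_2\notin\T$ means $S_i\cup T_1, S_i\cup T_2$ avoid $\tilde H$, giving us star edges $S_i\cup T_j\cup\{v\}\in H$, and combining these with $e_1, e_2$ should yield a $2$-regular subgraph directly.

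\textbf{The second bound} $d_{H_0}(B)\le \binom{n-1-b}{\ell-b}$ for $|B|=b\le \ell$ follows by a counting argument over $\ell$-sets extending $B$. Recall $H_0$ consists of edges containing no $T\in\T$. For each edge $e\in H_0$ with $B\subseteq e$, I would count the $\ell$-supersets $L$ of $B$ inside $e$; since $|B|=b$ and $|e|=k$, there are $\binom{k-b}{\ell-b}$ such $L$, and for each such $L$ the set $e\setminus L$ is a member of $N_{H^*}(L)$ that is \emph{not} in $\T$ (because $e\in H_0$ avoids all $T\in\T$, so $e\setminus L\notin\T$ as $e\setminus L\subseteq e$). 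The first statement caps $|N_{H^*}(L)\setminus\T|\le 1$, so each $\ell$-set $L\supseteq B$ contributes at most one edge. A standard double count — each $H_0$-edge through $B$ corresponds to at least one $(L, \text{rest})$ pair, and each of the $\binom{n-1-b}{\ell-b}$ choices of $L\supseteq B$ is used at most once — then gives $d_{H_0}(B)\le \binom{n-1-b}{\ell-b}$.

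\textbf{The third bound} $d_{H^*}(B)\le \binom{n-1-b}{\ell-b}(1+|\T|)$ follows by splitting $d_{H^*}(B) = d_{H_0}(B) + d_{H_1}(B)$ and bounding $d_{H_1}(B)$: each edge counted in $d_{H_1}(B)$ contains some $T\in\T$, and for a fixed $T$ the number of edges through both $B$ and $T$ is at most the number of ways to complete them, which I would bound crudely by $\binom{n-1-b}{\ell-b}$ (or an even simpler binomial), then sum over the $|\T|$ choices of $T$. I expect \textbf{the main obstacle} to be the first statement: getting the indices and set sizes to line up so that Claim~\ref{cl: produce f} produces a configuration certifying a $2$-regular subgraph, and in particular correctly exploiting that $T_1,T_2\notin\T$ to rule out the ``escape'' into $\tilde H$. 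The bookkeeping of which padding set $A$ to use and ensuring disjointness from $e_1\cup e_2$ (where the $2k$ disjoint sets $S_i$ buy us room) is the delicate part; once that contradiction is set up cleanly, the two degree bounds are routine double counts.
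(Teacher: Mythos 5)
Your overall architecture matches the paper's: for the first statement you pick an $S_i$ disjoint from $e_1\cup e_2$ (the $2k$ pairwise disjoint sets guarantee one exists, since $|e_1\cup e_2|\le 2k-\ell$), pad to a set of size $|e_1\cap e_2|-1$, and play the mechanism of Claim~\ref{cl: produce f} against the definition of $\T$; and your proof of $d_{H_0}(B)\le\binom{n-1-b}{\ell-b}$ is exactly the paper's double count. But the first statement has a genuine gap at precisely the point you flag as ``the delicate part'' and then leave open. Your closing move --- ``$T_1,T_2\notin\T$ means $S_i\cup T_1, S_i\cup T_2$ avoid $\tilde{H}$, giving star edges that combine with $e_1,e_2$'' --- works only when $T_1\cap T_2=\emptyset$: if $T_1\cap T_2\ne\emptyset$, the four edges $L\cup T_1$, $L\cup T_2$, $\{v\}\cup S_i\cup T_1$, $\{v\}\cup S_i\cup T_2$ give every vertex of $T_1\cap T_2$ degree $4$, so they are not $2$-regular. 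In the intersecting case the star edges must be $\{v\}\cup S_i\cup A\cup(T_j\setminus T_{3-j})$ with $|A|=|T_1\cap T_2|$, and now $T_j\notin\T$ buys you nothing: if $S_i\cup A\cup(T_j\setminus T_{3-j})\in\tilde{H}$, all you learn is that the \emph{different} $(k-\ell)$-set $A\cup(T_j\setminus T_{3-j})$ lies in $N_{\tilde{H}}(S_i)\subseteq\T$, which is no contradiction, since $\T$ is in general nonempty. The missing idea, which the paper supplies, is to choose $A\subseteq V'\setminus W$ where $W=\bigcup_{T\in\T}T$ (there is room by \eqref{eq: size T}), and then split: if $A=\emptyset$ your argument applies verbatim, while if $A\ne\emptyset$ then $A\cup(T_j\setminus T_{3-j})\not\subseteq W$, hence it cannot belong to $\T$, hence $S_i\cup A\cup(T_j\setminus T_{3-j})\notin\tilde{H}$ for both $j$, and the four edges $L\cup T_1$, $L\cup T_2$, $\{v\}\cup S_i\cup A\cup(T_1\setminus T_2)$, $\{v\}\cup S_i\cup A\cup(T_2\setminus T_1)$ form a $2$-regular subgraph. (A small slip besides: the contradiction throughout is a $2$-regular subgraph, not ``a full $k$-star edge through $v$.'')

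Your third bound also does not go through as written. For fixed $T\in\T$, ``the number of ways to complete'' $B\cup T$ to a $k$-set is $\binom{n-1-|B\cup T|}{k-|B\cup T|}$ with $k-|B\cup T|=\ell-b+|B\cap T|$; when $B\cap T\ne\emptyset$ this exceeds $\binom{n-1-b}{\ell-b}$ by a factor of order $n^{|B\cap T|}$. Concretely, for $b=\ell$ the claim asserts $d_{H^*}(B)\le 1+|\T|$, yet a single $T\in\T$ meeting $B$ already admits on the order of $n^{|B\cap T|}$ completions, so the crude count cannot yield the stated inequality. The repair is the paper's one-liner, immediate from the first statement: every $\ell$-set $L$ satisfies $d_{H^*}(L)\le |N_{H^*}(L)\setminus\T|+|\T|\le 1+|\T|$, and since every edge through $B$ contains at least one $\ell$-set $L$ with $B\subseteq L$, summing over the $\binom{n-1-b}{\ell-b}$ such $L$ gives $d_{H^*}(B)\le\binom{n-1-b}{\ell-b}(1+|\T|)$.
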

\begin{proof}
Suppose that there exists an $\ell$-set $L \subseteq V'$ such that $|N_{H^*}(L)\setminus \T|\geq 2$. Then there are two distinct $(k-\ell)$-sets $E_1,E_2 \in N_{H^*}(L)\setminus \T$. Since $2\leq \ell \leq k-1$, there exists $i\in [2k]$ such that $S_i$ is disjoint from $E_1\cup E_2 \cup L$. Also we choose a (possibly empty) set $A\subseteq V'\setminus W$ such that $|A|=|E_1\cap E_2|$ and $A\cap (E_1\cup E_2\cup L\cup S_i)=\emptyset$. This choice is possible since 
$$|V'\setminus (W\cup E_1\cup E_2\cup L\cup S_i)| \stackrel{\eqref{eq: size T}}{\geq} (n-1) - \epsilon^2 n - 2k + \ell -(\ell-1) \geq k.$$

We claim that both $ S_i\cup A \cup (E_1\setminus E_2)$ and $S_i\cup A\cup (E_2\setminus E_1)$ are not in $\tilde{H}$. 
Indeed, if $A=\emptyset$, then $E_j\setminus E_{3-j} = E_j$ for $j\in [2]$.
Since $E_j\notin \T=\bigcup_{i=1}^{2k} N_{\tilde{H}}(S_i)$, we obtain $ S_i\cup E_j= S_i \cup (E_j\setminus E_{3-j}) \notin \tilde{H}$ for $j\in [2]$. 
If $A\neq \emptyset$, then since $A\cap W=\emptyset$, we have for $j\in [2]$, $A\cup (E_j\setminus E_{3-j}) \not\subseteq W$.
So $A\cup (E_j\setminus E_{3-j}) \notin \T$ and thus $S_i\cup A\cup (E_j \setminus E_{3-j})\notin \tilde{H}$ for $j\in [2]$. Thus
$$ L\cup E_1, L\cup E_2, \{v\}\cup S_i\cup A \cup (E_1\setminus E_2) \text{ and }\{v\}\cup S_i\cup A\cup (E_2\setminus E_1)$$
form a $2$-regular subgraph of $H$, a contradiction. Thus the first part of the claim holds. 

%Obviously, $d_{H_0}(B) \leq |N^0_{H^*}(B)|$ holds for any $B\subseteq V'$.
For any $B\subseteq V'$ with $|B|=b\leq \ell$,
$$d_{H_0}(B) \leq \sum_{B\subseteq L, |L|=\ell} d_{H_0}(L) \le \sum_{B\subseteq L, |L|=\ell} |N_{H^*}(L)\setminus \T| \le \sum_{B\subseteq L, |L|=\ell} 1 = \binom{n-1-b}{\ell-b}.$$ 
Since $d_{H^*}(L) \leq |N_{H^*}(L)\setminus \T|+|\T|\le 1+|\T|$ for any $\ell$-set $L$, we also have
\[
d_{H^*}(B) \leq \sum_{B\subseteq L, |L|=\ell} d_{H^*}(L) \le \sum_{B\subseteq L, |L|=\ell} (1+|\T|) = \binom{n-1-b}{\ell-b}(1+ |\T|). \qedhere
\] 
%$$d_{H^*}(B) \leq \sum_{B\subseteq L, |L|=\ell} d_{H^*}(L) \leq \sum_{B\subseteq L, |L|=\ell} (|N_{H^*}(L)\setminus \T|+|\T|) \le \sum_{B\subseteq L, |L|=\ell} (1+|\T|) = \binom{n-1-b}{\ell-b}(1+ |\T|).$$ 
\end{proof}

In the next two subsections, we show that $|H_1|\le \e x$ (Subsection 3.2) and $|H_0| < (1-\e)x + \left\lfloor \frac{n-1}k \right\rfloor$ (Subsection 3.3), which together imply that $|H^*| = |H_1|+|H_0| < x+\left\lfloor \frac{n-1}k \right\rfloor$.
This contradicts~\eqref{eq: size of H*} and thus completes the proof of Theorem~\ref{thm:main}.

\subsection{Size of $H_1$.}
In this subsection, we show that $|H_1|\leq \epsilon x$. 
We first consider the case $\ell \leq k-2$. 

\begin{claim}
If $\ell \leq k-2$, then $|H_1|\leq \epsilon x$.
\end{claim}
\begin{proof}

We first claim that we may assume that $|\T|>0$, $|H_1| \geq 3|\T|$ and $\ell \ge (k+1)/2$. 
Indeed, since $|\T|=0$ implies $|H_1| =0 \leq \epsilon x$, we may assume that $|\T|>0$.
If $|H_1|< 3|\T|$, then by \eqref{eq: T}, $|H_1| \leq 6k^{k+1} n^{1-\ell} x \leq \epsilon x$ because $\ell \geq 2$ and $n$ is sufficiently large. Thus we may assume that $|H_1|\geq 3|\T|$. 
Finally, since $|H_1| \geq 3|\T| >|\T|$, there is a $(k-\ell)$-set $T \in \T$ which is a subset of two distinct edges $e_1,e_2$ of $H_1$. Since $|e_1\cap e_2| \geq |T| \geq k-\ell$, Claim~\ref{cl: lower bound on H*} and \eqref{eq: x1} implies that 
$$ \frac{1}{2} \binom{2\ell}{\ell}\binom{n-k-\ell-1}{k-\ell-1} \leq x \stackrel{\eqref{eq: x1}}{\leq} \epsilon^3 n^{\ell}.$$
Since $n$ is sufficiently large and $\epsilon$ is small, this implies that $\ell > k-\ell-1$. Thus we have $\ell \ge (k+1)/2$ since $k$ is odd.

Let $p$ be the number of tuples $(T, \{e_1, e_2\}, f)$ with the following properties.
\begin{itemize}
\item[(P.1.1)] $T\in \T$, $\{e_1,e_2\} \in \binom{H_1}{2}$ and $f\in \tilde{H}$,
\item[(P.1.2)] $T\subseteq e_1\cap e_2$,
\item[(P.1.3)] $f\cap (e_1\cap e_2) =\emptyset$ and $\{ |f\cap e_1|, |f\cap e_2|\}=\{1, |e_2\setminus e_1|-1\}$.
\end{itemize}

First we find a lower bound on $p$. 
%Let $P_1 := \{ \{e_1,e_2\} \in \binom{H_1}{2}: T\subseteq e_1\cap e_2\}$. 
Fix a $(k-\ell)$-set $T$ in $\T$ and a pair $\{e_1,e_2\} \in P(T)$, where $P(T) := \{ \{e_1,e_2\} \in \binom{H_1}{2}: T\subseteq e_1\cap e_2\}$. Let $A$ be an arbitrary set of size $|e_1\cap e_2|-1$ in $V'\setminus (e_1\cup e_2)$ and let $\{S,S'\}$ be an equipartition of $e_1\triangle e_2$ such that $|S\cap e_1|=1 $.
Then Claim~\ref{cl: produce f} implies that one of $A\cup S$ and $A\cup S'$ belongs to $\tilde{H}$ and it satisfies (P.1.3). Note that distinct choices of $(A,\{S,S'\})$ give us distinct $(k-1)$-sets in $\tilde{H}$.

Note that $|e_1\cap e_2|\geq |T|=k-\ell$. Since there are at least $\binom{n-2k}{|e_1\cap e_2|-1}\ge \binom{n-2k}{k-\ell-1}$ distinct choices of $A$ and at least one choice of equipartition $\{S,S'\}$ with $|S\cap e_1|=1$, we obtain 
\begin{align*}
p&\ge \sum_{T\in \T} \sum_{\{e_1,e_2\} \in P(T)} \binom{n-2k}{k-\ell-1} =\binom{n-2k}{k-\ell-1}\sum_{T\in \T} \binom{d_{H_1}(T)}2 \nonumber \\
&\geq \binom{n-2k}{k-\ell-1} |\T| \binom{\frac1{|\T|}\sum_{T\in \T} d_{H_1}(T)}{2} \ge \binom{n-2k}{k-\ell-1} |\T| \binom{|H_1|/{|\T|}}{2}.
\end{align*}
Note that we get the penultimate inequality from the convexity of the real function $f(z)=\binom{z}{2}=z(z-1)/2$. 
Since $|H_1| \ge 3|\T|$, we have that $\binom{|H_1|/{|\T|}}{2} \ge |H_1|^2/(3{|\T|}^2)$ and thus
\begin{align}\label{eq: p1 lower bound}
 p\ge \frac{1}{3}\binom{n-2k}{k-\ell-1} \frac{|H_1|^2}{|\T|}.
 \end{align}

Now we find an upper bound of $p$. Clearly there are at most $x=|\tilde{H}|$ choices for the $(k-1)$-set $f$ and there are at most $|\T|$ choices for $T \in \T$. For given $f$, we choose two disjoint subsets $S_1,S_2\subseteq f$ with $|S_1|=1$. There are at most $(k-1)2^{k-2}$ ways to choose $S_1$ and $S_2$.

Assume that $f,T,S_1$ and $S_2$ are fixed, and we count the number of pairs of distinct edges $e_1,e_2 \in H_1$ such that $T\subseteq e_1\cap e_2$, $e_1\cap f=S_1, e_2\cap f=S_2$, and $|e_2\setminus e_1|-1=|e_2\cap f|=|S_2|$. 
We choose $e_1 \in H_1$ with $T\cup S_1\subseteq e_1$, and a set $B\subseteq e_1\setminus (T\cup S_1)$ with $|B|= k-|T|-|S_2|-1$. 
By Claim~\ref{clm:Ffree}, there are $d_{H_1}(T\cup S_1)\le \binom{n}{2\ell-k-1}(|\T|+1)$ ways to choose such an edge $e_1$ and there are at most $2^{k}$ ways to choose such a set $B$. 
Then we choose $e_2 \in H_1$ such that $T\cup B\cup S_2 \subseteq e_2$ and $e_1\cap e_2 = T\cup B$. 
There are at most $d_{H_1}(T\cup B\cup S_2)\le 1$ way to choose such a set $e_2$ by Claim~\ref{clm:Ffree}. 
Thus for fixed $f,T,S_1,S_2$, the number of choices of $e_1,e_2$ is at most $2^k \binom{n}{2\ell-k-1}(|\T|+1)$.
%\begin{eqnarray*} \sum_{T\cup S_1\subseteq e_1\in H_1} \sum_{B\subseteq e_1\setminus (T\cup S_1)} d_{H_1}(T\cup B\cup S_2) 
%&\stackrel{{\rm Claim}~\ref{cl: >ell degree}}{\leq}&
% \sum_{T\cup S_1\subseteq e_1\in H_1 } \sum_{B\subseteq e_1\setminus (T\cup S_1)} 1 \\
% &\leq &  \sum_{T\cup S_1\subseteq e_1\in H_1 } 2^k \\
% & \leq & 2^k d_{H_1}(T\cup S_1) \\
%&\stackrel{{\rm Claim}~\ref{clm:Ffree}}{\leq}& % 2^k \binom{n-1-|T|-1}{\ell-(|T|+1)} (1+|\T|) \\
%%&\leq & 
%2^k \binom{n}{2\ell-k-1}(|\T|+1).
%\end{eqnarray*}
%Note that we get the first inequality because $|T\cup B\cup S_2| =k-1 \geq \ell+1$, and we get the final inequality because $|T|+1=k-\ell+1 \leq \ell$ and $H_1\subseteq H^*$. 
Thus we obtain
\begin{align}\label{eq: p1 upper bound}
 p &\leq \sum_{f\in \tilde{H}} \sum_{T\in \T}\sum_{S_1,S_2}2^k \binom{n}{2\ell-k-1}(|\T|+1) \nonumber \\
&\leq  x |\T| (k-1)2^{k-2} \cdot 2^{k}\binom{n}{2\ell-k-1}(|\T|+1) \leq   k 2^{2k} \binom{n}{2\ell-k-1} |\T|^2 x.
 \end{align}
 Note that the third sum is over $S_1,S_2$ satisfying $|S_1|=1, S_1\subseteq f, S_2\subseteq f\setminus S_1$.  From \eqref{eq: p1 lower bound} and \eqref{eq: p1 upper bound}, we get
\begin{eqnarray*}
|H_1|^2 \leq 3k2^{2k}   \binom{n}{2\ell-k-1} \binom{n-2k}{k-\ell-1}^{-1} |\T|^3 x\leq  k^{5k} n^{3\ell-2k}|\T|^3x \stackrel{\eqref{eq: T}}{\leq } k^{10k} n^{3-2k} x^4.
\end{eqnarray*}
Thus, we get
$$
|H_1|\leq k^{5k} n^{3/2-k} x^2 \stackrel{\eqref{eq: x1}}{\leq} k^{5k} \epsilon^3  n^{\ell+3/2-k} x \leq \epsilon x,
$$
because $\ell \leq k-2$.
\end{proof}

Now assume that $\ell=k-1$. 
In this case $\T$ is a collection of singletons, any vertex in $W$ belongs to $\T$ and $|W|=|\T|$.
We partition $H_1 = G_1 \cup G_2 \cup \cdots \cup G_k$, where $G_i=\{e\in H_1 \mid |e\cap W|=i\}$ for each $i\in [k]$.
Since $\ell=k-1$,  the fact that $\epsilon$ is small and \eqref{eq: T} imply
\begin{align}\label{eq: W size}
 3 |W|^{k-1} \leq 3(2k^{k+1} n^{2-k} x)^{k-1} \leq k^{2k^2} (x n^{1-k})^{k-2} x \stackrel{\eqref{eq: x1}}{\leq} k^{2k^2} (\epsilon^3 )^{k-2} x \leq \epsilon x/k.\end{align}
Now we show that $|G_i|\leq \epsilon x/k$ for all $i\in [k]$ which together imply that $|H_1|\le \e x$.

\begin{claim}\label{cl: k k-1}
$|G_k| \leq \epsilon x/k$ and $|G_{k-1}|\leq \epsilon x/k$.
\end{claim}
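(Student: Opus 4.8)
The plan is to bound $|G_k|$ and $|G_{k-1}|$ separately, in each case using the trick of constructing a $2$-regular subgraph to force many new $(k-1)$-sets into $\tilde{H}$, hence to bound the relevant family by $x$. Recall that in the $\ell=k-1$ case, $\mathcal{T}$ is a collection of singletons and $W=\bigcup_{T\in\T}T$ is exactly the set of these single vertices, with $|W|=|\T|$. An edge $e\in G_k$ lies entirely inside $W$, so there are at most $\binom{|W|}{k}\le |W|^k$ such edges; since $|W|$ is tiny (of order $n^{2-k}x$, and $x\le \e^3 n^{k-1}$), the estimate~\eqref{eq: W size} should already show $|G_k|\le |W|^k\le \e x/k$ directly. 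So $G_k$ is the easy half and I would dispose of it first with a one-line counting bound.

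For $G_{k-1}$, each edge $e$ meets $W$ in exactly $k-1$ vertices and has exactly one vertex outside $W$. The natural move is to map each $e\in G_{k-1}$ to the pair $(e\cap W,\, e\setminus W)$, where $e\cap W$ is a $(k-1)$-subset of $W$ and $e\setminus W$ is a single vertex of $V'\setminus W$. I would like to bound $|G_{k-1}|$ by controlling, for each fixed $(k-1)$-subset $U\subseteq W$, the number of edges of the form $U\cup\{w\}$ with $w\notin W$. The key idea should be this: if there were two such edges $U\cup\{w_1\}$ and $U\cup\{w_2\}$ with distinct $w_1,w_2\notin W$, then since these two edges intersect in the $(k-1)$-set $U$ (with $|U|=k-1\ge 2$), I can apply Claim~\ref{cl: produce f} to produce elements of $\tilde{H}$. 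Concretely, $e_1\triangle e_2=\{w_1,w_2\}$ has an equipartition into two singletons, and I can attach an $(k-2)$-set $A$ disjoint from everything to get a new $(k-1)$-set in $\tilde{H}$. Since $|U|=k-1$ and the $w_i$ range over roughly $n$ choices, the multiplicity of each $U$ that exceeds $1$ gets converted into at least one fresh element of $\tilde{H}$, and distinct $(A,\{S,S'\})$ give distinct members of $\tilde{H}$, so the total excess is bounded by a constant times $x$ divided by a large power of $n$.

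More precisely, I expect the double-counting to run as follows. Partition $G_{k-1}$ into those edges whose $W$-trace $U$ is shared by at most one partner and those where it is shared by many; the number of $(k-1)$-subsets $U\subseteq W$ is at most $\binom{|W|}{k-1}\le|W|^{k-1}$, which by~\eqref{eq: W size} is already at most $\e x/(3k)$, so the contribution of ``at most one edge per $U$'' is negligible. For a $U$ supporting $d_U\ge 2$ edges $U\cup\{w\}$, each unordered pair of partners yields, via Claim~\ref{cl: produce f}, a distinct element of $\tilde{H}$ after choosing a disjoint $A$ of size $|U|-1=k-2$; there are about $\binom{n-2k}{k-2}\approx n^{k-2}$ choices of $A$, so $\sum_U\binom{d_U}{2}\cdot n^{k-2}\lesssim x$, giving $\sum_U\binom{d_U}{2}\lesssim x\cdot n^{-(k-2)}$ and hence $\sum_U d_U\le |W|^{k-1}+2\sum_U\binom{d_U}{2}$, both terms bounded by $\e x/k$. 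The main obstacle I anticipate is the bookkeeping to guarantee that the auxiliary set $A$ can always be chosen disjoint from $e_1\cup e_2\cup W$ and from the relevant $S_i$, and that the resulting $(k-1)$-sets in $\tilde H$ are genuinely distinct across different pairs $(U,\{w_1,w_2\})$; this is routine given $|W|\le\e^2 n$ from~\eqref{eq: size T}, but it requires care to avoid double-counting the produced members of $\tilde{H}$. I would handle this exactly as in Claim~\ref{clm:Ffree}, exploiting that $A\cap W=\emptyset$ forces the constructed sets to lie outside $\T$ and hence to be genuinely new.
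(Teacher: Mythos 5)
Both halves of your proposal contain genuine gaps, and in each case the missing ingredient is different from what you anticipated. For $G_k$, your ``one-line counting bound'' $|G_k|\le\binom{|W|}{k}\le |W|^k$ is not covered by \eqref{eq: W size}: that estimate bounds $3|W|^{k-1}$ by $\epsilon x/k$, not $|W|^k$, and the extra factor of $|W|$ is fatal. Indeed, with $\ell=k-1$ we only know $|W|\le 2k^{k+2}n^{2-k}x$, and when $x$ is near its ceiling $\epsilon^3 n^{k-1}$ from \eqref{eq: x1} this allows $|W|=\Theta(\epsilon^3 n)$, so $|W|^k$ can be of order $\epsilon^{3k}n^k$, exceeding $\epsilon x/k\le \epsilon^4 n^{k-1}/k$ by a factor of roughly $n$. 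The paper closes this gap with an idea absent from your proposal: $G_k$ is itself a $k$-uniform hypergraph on the vertex set $W$ with no $2$-regular subgraph, so Theorem~\ref{thm:asymptotics} (with $\epsilon=2$) gives $|G_k|\le 3\binom{|W|}{k-1}$ whenever $|W|\ge n_0(k,2)$, saving exactly the factor $|W|$ that makes \eqref{eq: W size} applicable; the remaining case $|W|<n_0$ is handled by the trivial constant bound $n_0^k<\epsilon x/k$, using $x\ge n-2k+1$ from Claim~\ref{clm:n}.

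For $G_{k-1}$, your double counting is both flawed as written and unnecessary. Flawed: the inequality $\sum_U\binom{d_U}{2}\cdot n^{k-2}\lesssim x$ requires the produced $(k-1)$-sets to be distinct across different pairs $(U,\{w_1,w_2\})$, and this fails: the set $f=A\cup\{w_1\}$ given by Claim~\ref{cl: produce f} is disjoint from $U$ and so carries no trace of $U$, hence every $U'\subseteq W$ with two extensions through $w_1$ produces the same $f$ for the same $A$. (Claim~\ref{cl: produce f} only guarantees distinctness within one fixed pair $e_1,e_2$.) Your proposed remedy conflates $\T$ with $\tilde{H}$: knowing $A\cup\{w_i\}\not\subseteq W$, hence ``outside $\T$'', says nothing about $f$ being a fresh member of $\tilde{H}$ not counted for another pair. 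Note also that even the single-pair version of your argument yields only $x\ge c\,n^{k-2}$, which is perfectly consistent with \eqref{eq: x1} when $\ell=k-1$, so no contradiction is available along this route. Unnecessary: since $\ell=k-1$, the trace $U=e\cap W$ of any $e\in G_{k-1}$ is an $\ell$-set, and the leftover vertex $w=e\setminus W$ satisfies $\{w\}\notin\T$, i.e.\ $\{w\}\in N_{H^*}(U)\setminus\T$; the already-established Claim~\ref{clm:Ffree} then gives $|N_{H^*}(U)\setminus\T|\le 1$, so your multiplicity $d_U$ is at most $1$ for \emph{every} $U$ and the ``excess'' term vanishes identically. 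This is the paper's one-line proof: $|G_{k-1}|\le\sum_{L\in\binom{W}{k-1}}|N_{H^*}(L)\setminus\T|\le |W|^{k-1}\le\epsilon x/k$ by \eqref{eq: W size}. So the statement genuinely rests on Theorem~\ref{thm:asymptotics} and Claim~\ref{clm:Ffree}, not on the Claim~\ref{cl: produce f} counting scheme you propose.
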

\begin{proof}
First, since $G_k$ does not contain any 2-regular subgraphs, by Theorem~\ref{thm:asymptotics}, there exists $n_0=n_0(k,2)$ such that if $|W|\ge n_0$ then $|G_k|\le 3\binom{|W|}{k-1}$.
If $|W|\le n_0$, then by \eqref{eq: x1} we have $|G_k|\le n_0^k < \e x/k$ since $n$ is large enough and $x \geq n-2k+1$ by Claim~\ref{clm:n}.
Otherwise $|W|>n_0$, then $|G_k|\le 3\binom{|W|}{k-1}\le \epsilon x/k$ by \eqref{eq: W size}.
%First, note that $G_k$ is a subgraph of $H$ on vertex set $W$. Since $G_k$ does not contain any $2$-regular subgraph and $n$ is large enough, Theorem~\ref{thm:asymptotics} implies that 
%$$|G_k| \leq (1+\epsilon)\binom{|W|}{k-1}\leq 3 |W|^{k-1} \stackrel{\eqref{eq: W size}}{\leq } \epsilon^2 x.$$
Second, since $\ell=k-1$ we have
\[
|G_{k-1}| = \sum_{L \in  \binom{W}{k-1}} |N_{H^*}(L) \setminus \T|
\stackrel{{\rm Claim}~\ref{clm:Ffree}}{\leq } \sum_{L \in  \binom{W}{k-1}} 1 \leq |W|^{k-1}  \stackrel{\eqref{eq: W size}}{\leq } \epsilon x/k. \qedhere
\]
\end{proof}

Now we estimate $|G_i|$ for $1\le i\le k-2$.
\begin{claim}\label{clm:100}
$|G_i|\le \epsilon x/k$ for $i\in [k-2]$.
\end{claim}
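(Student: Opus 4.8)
\emph{Approach.} The plan is to stratify $G_i$ according to how each edge meets $W$. For an $i$-set $P\subseteq W$ write $\mathcal{F}_P:=\{Q\subseteq V'\setminus W: |Q|=k-i,\ P\cup Q\in G_i\}$. Since every $e\in G_i$ is uniquely recovered from the pair $(e\cap W,\ e\setminus W)$, the families $\mathcal{F}_P$ partition $G_i$, so $|G_i|=\sum_{P}|\mathcal{F}_P|$, the sum having at most $\binom{|W|}{i}\le |W|^i$ terms. Thus it suffices to bound $\sum_P|\mathcal{F}_P|$, and the governing inequalities throughout will be \eqref{eq: W size} (which gives $3|W|^{k-1}\le\epsilon x/k$) and \eqref{eq: x1}.

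\emph{Intersecting structure.} The key observation is that each $\mathcal{F}_P$ is \emph{almost} intersecting. If $Q_1,Q_2\in\mathcal{F}_P$ are disjoint, then $e_j:=P\cup Q_j$ satisfy $e_1\cap e_2=P$ and $e_1\triangle e_2=Q_1\cup Q_2$, so applying Claim~\ref{cl: produce f} with the equipartition $\{Q_1,Q_2\}$ and every $(i-1)$-set $A$ disjoint from $e_1\cup e_2$ shows $A\cup Q_1\in\tilde{H}$ or $A\cup Q_2\in\tilde{H}$. Counting the $\binom{n-2k+i-1}{i-1}$ admissible $A$ then forces $d_{\tilde{H}}(Q_1)+d_{\tilde{H}}(Q_2)\ge\binom{n-2k+i-1}{i-1}$, so one of $Q_1,Q_2$ lies in $\mathcal{Z}:=\{Z\in\binom{V'\setminus W}{k-i}: d_{\tilde{H}}(Z)\ge\tfrac12\binom{n-2k+i-1}{i-1}\}$. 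Hence $\mathcal{F}_P\setminus\mathcal{Z}$ is an intersecting $(k-i)$-uniform family for every $P$.

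\emph{Bounding the intersecting part.} To control $\mathcal{F}_P\setminus\mathcal{Z}$ I would invoke Lemma~\ref{lem:BBM}: either it is trivial (contained in a star with some centre $z$) or it is non-trivial and hence covered by at most $(k-i)^2-(k-i)+1$ pairs of vertices. In the covered case every surviving $Q$ contains one of boundedly many pairs $\{a,b\}$, so the count is at most $\sum_{\{a,b\}}d_{H^*}(P\cup\{a,b\})$, which Claim~\ref{clm:Ffree} bounds by a factor $(1+|\T|)$ times a binomial coefficient that shrinks as $i$ grows. In the trivial case I would pass to the link $\{Q\setminus\{z\}:Q\in\mathcal{F}_P\setminus\mathcal{Z},\ z\in Q\}$: two disjoint members $R_1,R_2$ give $e_j=P\cup\{z\}\cup R_j$ with $e_1\cap e_2=P\cup\{z\}$, and Claim~\ref{cl: produce f} again forces one of $R_1,R_2$ to have large $\tilde{H}$-degree, so the same dichotomy recurs with the co-degree size raised by one, terminating at the cases already settled in Claim~\ref{cl: k k-1}. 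Combined with $|\T|=|W|$ and \eqref{eq: W size}, summing these per-$P$ bounds against the $|W|^i$ choices of $P$ keeps this contribution below $\tfrac12\epsilon x/k$.

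\emph{Bounding the exceptional part and the main obstacle.} It remains to bound $\sum_P|\mathcal{F}_P\cap\mathcal{Z}|=|\{e\in G_i: e\setminus W\in\mathcal{Z}\}|$. Here I would use $\sum_{Z}d_{\tilde{H}}(Z)=\binom{k-1}{k-i}x$ to limit the number of high-degree traces, together with a bound on the number of ways a fixed trace $Z$ can be completed inside $W$. This last bound is the crux: a priori a trace could be completed by many vertices of $W$, and one must forbid this with a tailored $2$-regular subgraph. For example, if a $(k-1)$-set $Q$ is completed by $w_1,w_2\in W$ and there is a $(k-2)$-set $B$, disjoint from $Q\cup\{w_1,w_2\}$, with $\{w_1\}\cup B,\{w_2\}\cup B\notin\tilde{H}$, then $\{w_1\}\cup Q$, $\{w_2\}\cup Q$, $\{w_1\}\cup B\cup\{v\}$, $\{w_2\}\cup B\cup\{v\}$ is $2$-regular, a contradiction; the failure of this forces $w_1$ or $w_2$ to have enormous $\tilde{H}$-degree, and such vertices are rare since $\sum_{w\in W}d_{\tilde{H}}(w)\le(k-1)x$. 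I expect this exceptional estimate to be the main difficulty: reconciling the possibly large size of $W$ (up to order $\epsilon^3 n$) with a target bound of order $\epsilon x/k$ is exactly where the odd-$k$ obstruction—the absence of small $2$-regular subgraphs—forces these new constructions, used alongside the near-matching bound $k|H^*|=kx+O(n)$ from \eqref{eq: size of H*}.
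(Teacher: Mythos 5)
Your stratification of $G_i$ by the $W$-part, and the observation (via Claim~\ref{cl: produce f}) that each fibre $\mathcal{F}_P$ is intersecting outside a family $\mathcal{Z}$ of heavy traces, are both sound. But the proof does not close, and it fails exactly at the step you yourself flag as the crux: the exceptional part. Quantitatively, your threshold gives $|\mathcal{Z}|\le 2\binom{k-1}{i}x/\binom{n-2k+i-1}{i-1}=O_k(xn^{1-i})$, while a fixed trace $Z\in\mathcal{Z}$ may a priori admit up to $\binom{|W|}{i}$ completions inside $W$ --- Claim~\ref{clm:Ffree} only bounds $|N_{H^*}(L)\setminus\T|$, so it gives no control whatsoever over completions \emph{inside} $W=\T$. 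Since $|W|$ can be as large as $2k^{k+2}n^{2-k}x=O(\epsilon^3 n)$, the resulting bound is $O_k\bigl(xn^{1-i}|W|^i\bigr)=O_k(\epsilon^{3i}nx)$, overshooting the target $\epsilon x/k$ by a factor of order $n$. The case $i=1$ is even more telling: there the threshold is trivial, $\mathcal{Z}=\tilde{H}\cap\binom{V'\setminus W}{k-1}$, and your two-completion construction only shows that each trace has at most one ``light'' completion, so you still get a bound of order $x$ rather than $\epsilon x/k$; nothing in the proposal rules out $\Theta(x)$ edges of $G_1$ whose trace lies in $\tilde{H}$, and no four such edges form a $2$-regular subgraph by themselves. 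A further flaw: the claimed termination of your trivial-star recursion ``at the cases already settled in Claim~\ref{cl: k k-1}'' is misplaced, since the star centres $z$ lie in $V'\setminus W$, so iterating links never reduces to $G_{k-1}$ or $G_k$, and each iteration spawns a fresh heavy-trace family carrying the same unsolved completion problem.

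The paper sidesteps all of this with a single global double count and no intersecting-family analysis at all (Lemma~\ref{lem:BBM} enters only in Subsection~3.3, for $H_0$). Assuming $|G_i|>\epsilon x/k$, it counts triples $(S,\{e_1,e_2\},f)$ with $S\in\binom{W}{i}$, $S\subseteq e_1\cap e_2$, $e_1,e_2\in G_i$, and $f\in\tilde{H}$ produced by Claim~\ref{cl: produce f}: convexity over $S$ gives the lower bound $p_i\ge \binom{n-2k}{i-1}|G_i|^2/\bigl(3\binom{|W|}{i}\bigr)$, while for the upper bound it reconstructs $e_1$ from $(f,S,A_1)$ by extending $S\cup A_1$ to a $(k-1)$-set $L$ and using $|N_{H^*}(L)\setminus \T|\le 1$: because an edge of $G_i$ with $e_1\cap W=S$ has all remaining vertices outside $W$, Claim~\ref{clm:Ffree} pins down at most one continuation --- precisely the leverage your exceptional part lacks. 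Comparing the two bounds yields $|G_i|^2<\epsilon^2x^2/k^2$, a contradiction. To rescue your route you would need a substitute for Claim~\ref{clm:Ffree} that controls co-degrees into $W$, and no such tool is available in the paper.
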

\begin{proof}
Assume $|G_i|>\epsilon x/k$ for some $i\in[k-2]$, then \eqref{eq: W size} implies that $|G_i|\geq 3|W|^{k-1}$.
Let $p_i$ be the number of the tuples $(S, \{e_1, e_2\},f)$ with the following properties.
\begin{itemize}
\item[(P.2.1)] $e_1, e_2 \in G_i$ and $f\in \tilde{H}$,
\item[(P.2.2)] $S\in \binom{W}{i}$ and $S\subseteq e_1\cap e_2$,
\item[(P.2.3)] $f\cap (e_1\cap e_2) =\emptyset$, and $\{|f\cap e_1|,|f\cap e_2|\}=\{1, |e_1\setminus e_2|-1\}$.
\end{itemize}

Let $P_i(S):=\{ \{e_1,e_2\} \in \binom{G_i}{2} : S\subseteq e_1\cap e_2\}.$ By convexity, we have
\begin{align}\label{eq: convexity}
\sum_{S\in \binom{W}{i}} \sum_{ \{e_1, e_2\} \in P_i(S)} 1\ge \sum_{S\in \binom{W}{i}} \binom{d_{G_i}(S)}{2} \ge  \binom{|W|}{i} \binom{|G_i|/\binom{|W|}{i}}{2} \ge \frac{|G_i|^2}{3\binom{|W|}{i}},
\end{align}
where we used $\sum_{S\in \binom{W}{i}}d_{G_i}(S) = |G_i|$ and $|G_i| \geq 3|W|^{k-1} \geq 3\binom{W}{i}$.

Consider a set $S\subseteq W$ of size $i$, and a pair $\{e_1,e_2\} \in P_i(S)$. 
Let $A$ be an arbitrary set of size $|e_1\cap e_2|-1$ in $V'\setminus (e_1\cup e_2)$, and let $A_1,A_2$ be a partition of $e_1\triangle e_2$ such that $|A_1|=1$. 
The number of ways to choose $A$ is at least $\binom{n-2k}{|e_1\cap e_2|-1}\geq \binom{n-2k}{i-1}$ and the number of ways to choose $A_1,A_2$ is at least one. 
By Claim~\ref{cl: produce f}, at least one of $A\cup A_1 \in \tilde{H}$ and $A\cup A_2 \in \tilde{H}$ holds. 
Then either $(S,\{e_1,e_2\},A\cup A_1)$ or $(S,\{e_1,e_2\},A\cup A_2)$ satisfies (P.2.1)--(P.2.3). 
Since distinct choices of $(S,\{e_1,e_2\}, A, \{A_1,A_2\})$ give us distinct tuples, we have
\begin{align}\label{eq: p2 lower bound}
p_i\geq \sum_{S\in \binom{W}{i}} \sum_{\{e_1, e_2\}\in P_i(S)} \sum_{A} \sum_{A_1,A_2} 1 \geq \sum_{S\in \binom{W}{i}} \sum_{\{e_1, e_2\}\in P_i(S)} \binom{n-2k}{i-1} 
\stackrel{\eqref{eq: convexity}}{\geq}  \binom{n-2k}{i-1} \frac{|G_i|^2}{3\binom{|W|}{i}}.
\end{align}

Now we find an upper bound of $p_i$. Clearly there are at most $x=|\tilde{H}|$ choices of $f \in \tilde{H}$ and at most $\binom{|W|}{i}$ choices of $S\in \binom{W}{i}$ with $S\cap f =\emptyset$. For given $f$ and $S$, we choose two disjoint sets $A_1,A_2\subseteq f$ with $|A_1|=1$. There are at most $(k-1)2^{k-2}$ ways to choose such $A_1$ and $A_2$.

Assume $f,S,A_1,A_2$ are given, and we count the number of pairs $\{e_1,e_2\} \in P_i(S)$ such that $e_1\cap f=A_1$, $e_2\cap f=A_2$ and $|e_2\setminus e_1|-1=|e_2\cap f|=|A_2|$.  
We choose $e_1 \in G_i$ such that $S\cup A_1\subseteq e_1$ and $e_1 \setminus (S\cup A_1) \subseteq V'\setminus W$, and the number of ways to choose such $e_1$ is at most
\begin{eqnarray*}
\sum_{S\cup A_1\subseteq L\in \binom{V'}{k-1}} |N_{H^*}(L)\setminus W| \stackrel{\textrm{Claim}~\ref{clm:Ffree}}{\leq} \sum_{S\cup A_1\subseteq L\in \binom{V'}{k-1}}1 = \binom{n-i-2}{k-i-2}.
\end{eqnarray*}
We also choose a set $B\subseteq e_1\setminus (S\cup A_1)$ with $|B|= k-|S|-|A_2|-1$. There are at most $2^{k}$ ways to choose such a set $B$.
Then we choose $e_2 \in G_i$ such that $S\cup B\cup A_2 \subseteq e_2$, $e_1\cap e_2 = S\cup B$ and $e_2\setminus (S\cup B\cup A_2) \subseteq V'\setminus W$, and the number of ways to choose such $e_2$ is at most $|N_{H^*}(S\cup B\cup A_2)\setminus W|\le 1$ by Claim~\ref{clm:Ffree}.
Overall, for fixed $f,S,A_1,A_2$, the number of choices of $e_1,e_2$ is at most $2^k \binom{n-i-2}{k-i-2}$.
%\begin{eqnarray*} \sum_{S\cup A_1\subseteq e_1\in G_i} \sum_{B\subseteq e_1\setminus (S\cup A_1)} |N_{H^*}(S\cup B\cup A_2)\setminus W| 
%&\stackrel{{\rm Claim}~\ref{clm:Ffree}}{\leq}&
% \sum_{S\cup A_1\subseteq e_1\in G_i } \sum_{B\subseteq e_1\setminus (S\cup A_1)} 1 \\
% &\leq &  \sum_{S\cup A_1\subseteq e_1\in G_i } 2^k \le  2^k \binom{n-i-2}{k-i-2}.
%\end{eqnarray*}
%Note that we get the first inequality because $|S\cup B\cup A_2| =k-1=\ell$, and we get the last inequality because $i+1\leq k-1=\ell$. 
Thus we obtain
\begin{eqnarray}\label{eq: p2 upper bound}
 p_i &\leq& \sum_{f\in \tilde{H}} \sum_{S \in \binom{W}{i}}\sum_{A_1,A_2}2^k \binom{n}{k-i-2}\nonumber \\
&\leq & x \binom{|W|}{i} (k-1)2^{k-2} \cdot 2^{k}\binom{n}{k-i-2} \leq   k 2^{2k} x \binom{|W|}{i}\binom{n}{k-i-2} .
 \end{eqnarray}
 Note that the third sum is over $A_1,A_2$ satisfying $|A_1|=1, A_1\subseteq f, A_2\subseteq f\setminus A_1$. 
  
From \eqref{eq: p2 lower bound} and \eqref{eq: p2 upper bound} and the fact that $|W|=|\T|$, we obtain
\begin{eqnarray*}
|G_i|^2 &\leq& 3 k 2^{2k} \binom{|W|}{i}^{2}  \binom{n}{k-i-2} \binom{n-2k}{i-1}^{-1} x \\
&\stackrel{\eqref{eq: T}}{\leq}& k^{3k} (2 k^{k+1} n^{2-k} x)^{2i} n^{k-2i-1} x \\
&\leq & k^{10 k^2} n^{-(2i-1)(k-1)} x^{2i+1}\\
&\stackrel{\eqref{eq: x1}}{\leq} & k^{10 k^2} n^{-(2i-1)(k-1)} (\epsilon^3 n^{k-1})^{2i-1} x^2 \leq  k^{10k^2}  \epsilon^3 x^2 < \epsilon^2 x^2/k^2.
\end{eqnarray*}
This contradicts that $|G_i|> \epsilon x/k$. 
Thus the claim holds.
%we conclude that $|G_i|\leq \epsilon x/k$ for all $i\in [k-2]$.
\end{proof}

%Thus from Claim~\ref{cl: k k-1} and Claim~\ref{clm:100}, we conclude $|H_1| \leq \sum_{i=1}^{k} |G_i| \leq \epsilon x$.

\subsection{Size of $H_0$.}
At last we show that $|H_0| < (1-\e)x + \left\lfloor \frac{n-1}k \right\rfloor$.
Assume to the contrary, that 
\begin{align}\label{eq: H0 size}
 |H_0| \geq (1-\epsilon)x + \left\lfloor \frac{n-1}k\right\rfloor.
\end{align}
For any $u\in V'$, let $F_u:= N_{H_0}(u) \setminus \tilde{H}$.
We first observe that $F_u$ is an intersecting family.

\begin{claim}\label{cl: intersecting}
For any $u\in V'$, $F_u$ forms an intersecting family.
\end{claim}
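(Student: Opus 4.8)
=== PROOF PROPOSAL ===

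The plan is to show that $F_u = N_{H_0}(u) \setminus \tilde{H}$ is intersecting by supposing, for contradiction, that it contains two disjoint $(k-1)$-sets $S$ and $S'$, and then building an explicit $2$-regular subgraph of $H$. The natural candidate to try first is the same four-edge configuration used throughout the preparation claims: since $S, S' \in N_{H_0}(u)$, both $S \cup \{u\}$ and $S' \cup \{u\}$ are edges of $H^* \subseteq H$; and since $S, S' \notin \tilde{H}$, by the defining property of $\tilde{H}$ recorded right after its definition, both $S \cup \{v\}$ and $S' \cup \{v\}$ are edges of $H$. If $S \cap S' = \emptyset$, then in the four-edge family
$$ S \cup \{u\}, \quad S' \cup \{u\}, \quad S \cup \{v\}, \quad S' \cup \{v\} $$
every vertex of $S$ lies in exactly two edges (the $u$-edge and the $v$-edge), every vertex of $S'$ likewise lies in exactly two edges, and each of $u$ and $v$ lies in exactly two edges. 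This is precisely a $2$-regular subgraph of $H$, contradicting our standing hypothesis that $H$ has no $2$-regular subgraph. Hence no two disjoint members can coexist in $F_u$, which is exactly the assertion that $F_u$ is intersecting.

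First I would verify that the four listed sets are genuinely distinct edges, so that they really form a subgraph on the relevant vertices: distinctness of $S \cup \{u\}$ from $S' \cup \{u\}$ and of $S \cup \{v\}$ from $S' \cup \{v\}$ follows from $S \neq S'$ (indeed they are disjoint); and the $u$-edges differ from the $v$-edges because $u \neq v$ and, crucially, because $v \notin V' = V(H) \setminus \{v\}$ while $u \in V'$, so no edge containing $v$ can coincide with one of the form $S \cup \{u\} \subseteq V'$. Next I would confirm membership in $H$: the two $u$-edges lie in $H_0 \subseteq H^* \subseteq H$ by definition of $F_u \subseteq N_{H_0}(u)$, and the two $v$-edges lie in $H$ because $S, S' \notin \tilde{H}$ means (by the remark following the definition of $\tilde H$) that $S \cup \{v\}, S' \cup \{v\} \in H$.

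The only genuine subtlety, and the step I expect to require the most care, is the degree bookkeeping in the $2$-regularity check, which hinges entirely on the disjointness $S \cap S' = \emptyset$. If $S$ and $S'$ shared a vertex $w$, that $w$ would sit in all four edges and have degree $4$, destroying the argument — so the contradiction hypothesis $S \cap S' = \emptyset$ is exactly what makes each vertex in $S \cup S' \cup \{u, v\}$ have degree precisely $2$. This is the identical mechanism already exploited in Claim~\ref{cl: produce f} and in the proof of Claim~\ref{clm:n}, so no new machinery is needed; the proof reduces to instantiating that same four-edge gadget and reading off the degrees. I would conclude by observing that since the assumption of two disjoint sets in $F_u$ forces a forbidden $2$-regular subgraph, $F_u$ must be intersecting.
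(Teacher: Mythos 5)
Your proof is correct and follows exactly the paper's argument: assuming two disjoint sets in $F_u$, the four edges $S\cup\{u\}$, $S'\cup\{u\}$, $S\cup\{v\}$, $S'\cup\{v\}$ form a forbidden $2$-regular subgraph. Your additional verifications (distinctness of the four edges and the degree count relying on $S\cap S'=\emptyset$) are sound and merely make explicit what the paper leaves implicit.
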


\begin{proof}
If not, then there are two disjoint $(k-1)$-sets $A,A' \in F_u= N_{H_0}(u)\setminus \tilde{H}$. Since $A,A'\notin \tilde{H}$, 
$$ A\cup \{u\}, A'\cup \{u\}, A\cup \{v\} \text{ and } A'\cup \{v\}$$
form a $2$-regular subgraph of $H$, a contradiction. 
\end{proof}

If $f\in \tilde{H}$ belongs to $N_{H_0}(u)\cap N_{H_0}(u')$ for distinct $u, u' \in V'$, then fix any $L\in \binom{f}{\ell}$, we have $d_{H_0}(L)\geq 2$, contradicting Claim~\ref{clm:Ffree}. Thus $f\in \tilde{H}$ belongs to $N_{H_0}(u)$ for at most one $u\in V'$, which implies that
\begin{equation}\label{eq:H0}
x= |\tilde{H}| \ge \sum_{u\in V'} |N_{H_0}(u)\cap \tilde{H}| =  \sum_{u\in V'} (d_{H_0}(u)-|F_u|).
\end{equation}

Note that this implies that $\ell \geq 3$ and $k\geq 5$. 
In fact, if $\ell=2$, then Claim~\ref{clm:Ffree} implies that $d_{H_0}(\{u,u'\})\leq 1$ for any two distinct vertices $u,u'\in V'$, i.e., any two edges in $H_0$ share at most one vertex. Thus $N_{H_0}(u)$ forms a matching. By Claim~\ref{cl: intersecting}, $|F_u|\le 1$ as $F_u$ is an intersecting subfamily of a matching.
By \eqref{eq:H0},
$$x \geq \sum_{u\in V'} (d_{H_0}(u)-1) = k|H_0| - (n-1) \stackrel{\eqref{eq: H0 size}}{\geq} k(1-\epsilon)x - (k-1).$$
Thus $x \leq 1$, contradicting \eqref{eq: x1}. Thus $\ell \geq 3$. Since $3\leq \ell \leq k-1$ by \eqref{eq: x1} and $k$ is odd, we have $k\geq 5$.

%\begin{claim}\label{cl: ell 3 k 5}
%We have $\ell \geq 3$ and $k\geq 5$. 
%\end{claim}
%\begin{proof}
%Assume $\ell=2$. Claim~\ref{clm:Ffree} implies that $d_{H_0}(\{u,u'\})\leq 1$ for any two distinct vertices $u,u'\in V'$, i.e., any two edges in $H_0$ share at most one vertex. Thus $N_{H_0}(u)$ forms a matching. By Claim~\ref{cl: intersecting}, $|F_u|\le 1$ as $F_u$ is an intersecting subfamily of a matching.
%By \eqref{eq:H0},
%$$x \geq \sum_{u\in V'} (d_{H_0}(u)-1) = k|H_0| - (n-1) \stackrel{\eqref{eq: H0 size}}{\geq} k(1-\epsilon)x - (k-1).$$
%Thus $x \leq 1$, contradicting \eqref{eq: x1}. Thus $\ell \geq 3$. Since $3\leq \ell \leq k-1$ by \eqref{eq: x1} and $k$ is odd, we have $k\geq 5$.
%\end{proof}

Let 
$$X:=\{u\in V': F_u \text{ is a trivial intersecting family} \}$$ and for $u\in X$, let $p(u)$ be a vertex in $V'$ such that every $(k-1)$-set in $F_u$ contains $p(u)$.
We claim that
%\begin{claim}\label{cl: sum X}
\begin{equation}\label{eq:sumX}
\sum_{u \in X} |F_u| \geq (1-\epsilon)(k-1)|H_0|.
\end{equation}
%\end{claim}
We first show that for $u\notin X$, $|F_u| \leq k^2 n^{\ell-3}$.
Indeed, since $u\notin X$, $F_u$ is a non-trivial intersecting $(k-1)$-uniform family. By Lemma~\ref{lem:BBM}, there are pairs of vertices $w_1w'_1,\dots, w_{t}w'_{t}$ with $t\leq (k-1)^2-(k-1)+1 \leq k^2$ which together cover all $(k-1)$-sets in $F_u$. 
Since $\ell\geq 3$, Claim~\ref{clm:Ffree} implies
\[
|F_u|\leq \sum_{i=1}^{t} d_{H_0}( \{u,w_i,w'_i\} ) \leq k^2 \binom{n-4}{\ell-3} \leq k^2 n^{\ell-3}.
\]
Then note that $\sum_{u\in V'} d_{H_0}(u) = k|H_0|$. From \eqref{eq:H0}, we get 
\begin{eqnarray*}
x&\geq& \sum_{u\in V'} (d_{H_0}(u) - |F_u|) \geq \sum_{u\in V'} d_{H_0}(u) -  \sum_{u\in X} |F_u| -\sum_{u\in V'\setminus X} |F_u|\\ &\geq& k|H_0| -  \sum_{u\in X} |F_u| - \sum_{u\in V'\setminus X} k^2 n^{\ell-3}
\geq  k|H_0| - \sum_{u\in X} |F_u| - k^2 n^{\ell-2}.
\end{eqnarray*}
Since $n$ is sufficiently large, \eqref{eq: x1} implies that $k^2 n^{\ell-2} \leq \epsilon^4 n^{\ell-1} \leq \epsilon x$.
Thus we get
\begin{align*}
\sum_{u\in X} |F_u| \geq k|H_0| -x - \epsilon x \stackrel{\eqref{eq: H0 size}}{\geq} k|H_0| -  \frac{(1+\epsilon)|H_0|}{1-\epsilon}  \geq (1-\epsilon)(k-1)|H_0|
\end{align*}
as $k\ge 5$.
So~\eqref{eq:sumX} is proved.

%\begin{claim}\label{cl: Fu sum for not X}
%For $u\notin X$, $|F_u| \leq k^2 n^{\ell-3}$.
%\end{claim}
%\begin{proof}
%Since $u\notin X$, $F_u$ is a non-trivial intersecting $(k-1)$-uniform family. By Lemma~\ref{lem:BBM}, there are pairs of vertices $w_1w'_1,\dots, w_{t}w'_{t}$ with $t\leq (k-1)^2-(k-1)+1 \leq k^2$ which together cover all $(k-1)$-sets in $F_u$. 
%By Claim~\ref{clm:Ffree} and Claim~\ref{cl: ell 3 k 5},
%\[
%|F_u|\leq \sum_{i=1}^{t} d_{H_0}( \{u,w_i,w'_i\} ) \leq k^2 \binom{n-4}{\ell-3} \leq k^2 n^{\ell-3}. \qedhere
%\]
%\end{proof}
%
%
%\begin{claim}\label{cl: sum X}
%$\sum_{u \in X} |F_u| \geq (1-\epsilon)(k-1)|H_0|.$
%\end{claim}
%\begin{proof}
%Note that $\sum_{u\in V'} d_{H_0}(u) = k|H_0|$. From \eqref{eq:H0} and Claim~\ref{cl: Fu sum for not X}, 
%\begin{eqnarray*}
%x&\geq& \sum_{u\in V'} (d_{H_0}(u) - |F_u|) \geq \sum_{u\in V'} d_{H_0}(u) -  \sum_{u\in X} |F_u| -\sum_{u\in V'\setminus X} |F_u|\\ &\geq& k|H_0| -  \sum_{u\in X} |F_u| - \sum_{u\in V'\setminus X} k^2 n^{\ell-3}
%\geq  k|H_0| - \sum_{u\in X} |F_u| - k^2 n^{\ell-2}.
%\end{eqnarray*}
%Since $n$ is sufficiently large, \eqref{eq: x1} implies that $k^2 n^{\ell-2} \leq \epsilon^4 n^{\ell-1} \leq \epsilon x$.
%Thus we get
%\begin{align*}
%\sum_{u\in X} |F_u| \geq k|H_0| -x - \epsilon x \stackrel{\eqref{eq: H0 size}}{\geq} k|H_0| -  \frac{(1+\epsilon)|H_0|}{1-\epsilon}  \geq (1-\epsilon)(k-1)|H_0|
%\end{align*}
%as $k\ge 5$.
%\end{proof}

For $t\in [k-1]$, let $q_t$ be the number of the tuples $(u, \{e_1, e_2\}, f)$ with the following properties.
\begin{enumerate}[($i$)]
\item[(Q1)$_t$] $u\in X$, $e_i\setminus \{u\} \in F_u$ for $i\in [2]$ and $|e_1\cap e_2|=t$,
\item[(Q2)] $f \in \tilde{H}$, $f\cap (e_1\cap e_2)=\emptyset$,
\item[(Q3)] $\{|f\cap e_1|, |f\cap e_2|\} = \{ 1, |e_1\setminus e_2|-1\}$.
\end{enumerate}

For $u\in X$ and $t\in [k-1]$, we let 
$$F^t_u:=\{\{e_1,e_2\}: e_i\setminus\{u\} \in F_u \text{ for }i\in [2], |e_1\cap e_2|=t\},$$
and
$$ P^t:= \{(u,\{e_1,e_2\}): u\in X, \{e_1,e_2\} \in F^t_u\}.$$
Note that $F^1_u=\emptyset$ for any $u\in X$ since $F_u$ is an intersecting family. Since $u\in X$, we have $\{u, p(u)\} \subseteq e_1\cap e_2$ for $(u,\{e_1,e_2\}) \in P^t$.
By convexity of function $f(z)=\binom{z}{2} = \frac{z(z-1)}{2}$, we have
\begin{eqnarray}\label{eq:sum}
\sum_{t=2}^{k-1} |P^t| &=& \sum_{u\in X}\binom{|F_u|}{2} \ge |X| \binom{\frac{1}{|X|}\sum_{u\in X}|F_u|}{2} \nonumber \\
&\stackrel{\eqref{eq:sumX}}{\ge}& \frac{(1-\epsilon)^2(k-1)^2|H_0|^2}{2n} - \frac{1}{2}(1-\epsilon)(k-1)|H_0| \ge  \frac{(1-2\epsilon)(k-1)^2|H_0|^2}{2n}.
\end{eqnarray}
Here, we get the last inequality since we have $\e^2 |H_0| \geq \e^2 x \geq \epsilon^5 n^2 > 2n$ from \eqref{eq: x1}, the fact that $\ell \ge 3$ and $n$ is large.

Now we find a lower bound of $q_t$. Note that $q_1=0$ since $F_u$ is an intersecting family for any $u\in X$.
For $2\le t\le k-1$, first fix a vertex $u\in X$ and let $\{e_1,e_2\} \in F^t_u$.
We choose a set $A \subseteq V'\setminus (e_1\cup e_2)$ of size $t-1$.
We also choose an equipartition $\{S,S'\}$ of $e_1\triangle e_2$ such that $|S\cap e_1|=1$. The number of choices of such $\{S,S'\}$ is $(k-t)^2$.
Then Claim~\ref{cl: produce f} implies that either $A\cup S$ or $A\cup S'$ belongs to $\tilde{H}$ and it satisfies (Q3) as it plays the role of $f$. Note that for distinct choices of $(A,\{S,S'\})$, we get distinct $(k-1)$-sets $f$ in $\tilde{H}$.

So for $2\leq t\leq k-1$,
\begin{eqnarray*}
q_t &\ge& \sum_{u\in X} \sum_{\{e_1,e_2\}\in F^t_u} (k-t)^2\binom{n-2k}{t-1} = (k-t)^2 \binom{n-2k}{t-1} |P^t|.
\end{eqnarray*}
Since $k\geq 5$ and $n$ is large, $(k-t)^2 \binom{n-2k}{t-1}\geq n(n-2k)$ for $t\geq 3$. Thus we obtain
\begin{align}\label{eq: qt lower bound}
q_2 \geq (k-2)^2 (n-2k) |P^2| \enspace \text{ and }\enspace
q_t \geq n(n-2k) |P^t| \enspace \text{for} \enspace 3\le t\le k-1.
\end{align}

Next we find an upper bound of $q_t$.
Clearly there are at most $x=|\tilde{H}|$ choices for $f\in \tilde{H}$. 
We choose two disjoint sets $A_1, A_2 \subseteq f$ such that $|A_1|=1, |A_2|=k-t-1$. The number of such choices is at most $(k-1)\binom{k-2}{k-t-1}$. Now we choose $e_1$ in $H_0$ containing $A_1$. The number of choices for $e_1$ is at most $|H_0|$. Once $e_1$ is chosen, we choose $u\in X\cap (e_1 \setminus A_1)$ such that $\{u,p(u)\}\subseteq e_1$. There are at most $k-1$ such choices for $u$. 
Now we choose a $(t-2)$-subset $B\subseteq e_1\setminus (A_1\cup \{u,p(u)\})$, and there are $\binom{k-3}{t-2}$ ways to choose such $B$.
For given $A_2,B, u$, we choose $e_2$ such that $A_2\cup B\cup \{u,p(u)\} \subseteq e_2$.
Since $|A_2\cup B\cup \{u,p(u)\}|=k-1$, it contains a subset $L$ of size  $\ell$, thus Claim~\ref{clm:Ffree} implies that $d_{H_0}(A_2\cup B\cup \{u,p(u)\}) \leq d_{H_0}(L) \leq 1$. Thus the number of choices of $e_2$ is at most $1$.
Thus we get
\begin{align}\label{eq: qt upper bound}
q_t \leq x (k-1)\binom{k-2}{k-t-1}|H_0| (k-1) \binom{k-3}{t-2}.
\end{align}
Thus we obtain
\begin{eqnarray*}
(k-2)^2 n(n-2k) \sum_{t=2}^{k-1} |P^t| &\stackrel{\eqref{eq: qt lower bound}}{\leq}& n q_2 + (k-2)^2 \sum_{t=3}^{k-1} q_t \\
&\stackrel{\eqref{eq: qt upper bound}}{\leq}&  (k-1)^2(k-2) xn|H_0| + \sum_{t=3}^{k-1} (k-1)^4 \binom{k-2}{k-t-1}\binom{k-3}{t-2} x|H_0| \\
&\leq & (1+\epsilon)  (k-1)^2(k-2) xn|H_0|.
\end{eqnarray*}
Note that we get the last inequality since $n$ is sufficiently lage. By \eqref{eq:sum}, we get
$$ \frac{1}{2}(1-2\epsilon)(k-2)^2(k-1)^2|H_0|^2(n-2k) \leq (k-2)^2 n(n-2k) \sum_{t=2}^{k-1} |P^t| \leq (1+\epsilon)(k-1)^2(k-2)x n|H_0|.$$
Since $|H_0|>0$, dividing both sides by $(k-2)(k-1)^2 |H_0|/2$ gives
$$(1-2\epsilon)(k-2)|H_0|(n-2k) \leq 2(1+\epsilon) xn.$$
Since we have $(1-\epsilon)x \leq |H_0|$ from \eqref{eq: H0 size},
$$ (1-2\epsilon)(k-2)(1-\epsilon) x(n-2k) \leq 2(1+\epsilon)x n.$$
Since $x\geq 1$, we get $ (1-3\epsilon)(k-2)(n-2k)\leq 2(1+\epsilon)n$,
which is a contradiction since $k\geq 5$, $\epsilon$ is small and $n$ is large enough. 
So~\eqref{eq: H0 size} does not hold and we are done.
%Therefore, we conclude that $x=0$. It implies that $H$ contains a full $k$-star and this completes the proof of Theorem~\ref{thm:main}.

\section{Concluding Remarks}

In our proof of Theorem~\ref{thm:main}, except the use of Theorems~\ref{thm:asymptotics} and~\ref{thm:stability}, we only use the assumption that $H$ does not contain any $2$-regular subgraphs on $2k$ vertices. This motivates the following conjecture.

\begin{conjecture}\label{conj:1}
For every integer $k\geq 3$, there exists $n_k$ such that the following holds for all $n\geq n_k$. If $H$ is an $n$-vertex $k$-uniform hypergraph with no 2-regular subgraphs on $2k$ vertices, then
$$|H|\leq \binom{n-1}{k-1} + \left\lfloor \frac{n-1}{k} \right\rfloor.$$
Moreover, equality holds if and only if $H$ is a full $k$-star with center $v$ together with a maximal matching omitting $v$.
\end{conjecture}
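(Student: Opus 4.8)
The plan is to run the proof of Theorem~\ref{thm:main} essentially unchanged, after isolating the two places where the full hypothesis ``$H$ has no $2$-regular subgraph'' is really used. The key observation is that \emph{every} $2$-regular subgraph produced in Subsections 3.1--3.3 has exactly four edges: in Claim~\ref{cl: produce f}, Claim~\ref{clm:n}, Claim~\ref{clm:Ffree} and Claim~\ref{cl: intersecting} the contradiction always comes from four $k$-sets in which each vertex has degree $2$. Since a four-edge $2$-regular $k$-uniform hypergraph has $4k/2=2k$ vertices, each of these configurations lives on exactly $2k$ vertices. Hence the entire combinatorial core of the proof, from Claim~\ref{cl: produce f} onward, only ever invokes the absence of $2$-regular subgraphs \emph{on $2k$ vertices}, and so transfers verbatim to the weaker hypothesis of Conjecture~\ref{conj:1}. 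The only inputs that do not survive this reduction are Theorems~\ref{thm:asymptotics} and~\ref{thm:stability}, whose statements assume that $H$ avoids all $2$-regular subgraphs; re-proving them under the weakened hypothesis is the crux of the matter.

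My first step would therefore be to establish analogues of Theorems~\ref{thm:asymptotics} and~\ref{thm:stability} for hypergraphs containing no four-edge $2$-regular subgraph (equivalently, no $2$-regular subgraph on $2k$ vertices). The most direct route is to trace through Mubayi and Verstra\"{e}te's proofs of these two theorems and verify that they too only extract four-edge $2$-regular configurations; if so, both statements transfer at once and, with the first paragraph, settle the conjecture for odd~$k$. Wherever their argument produces a larger $2$-regular subgraph -- which is precisely what one expects for even~$k$, where three-edge configurations on $3k/2$ vertices are available and are now permitted -- that step must be replaced. For the asymptotic bound one natural starting point is a single four-edge type, namely the configuration of Claim~\ref{clm:n}: forbidding it forces, for all vertices $u\neq w$, the common link $\{S : S\cup\{u\},\,S\cup\{w\}\in H\}$ to be intersecting. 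This is a genuine constraint, but on its own it yields only $|H|=O(n^{k-1/2})$ through Erd\H{o}s--Ko--Rado and convexity, which is far too weak; one must combine it with the remaining four-edge types (whose duals are the $k$-regular multigraphs on four vertices with all edge-multiplicities at most $k-1$) to bring the exponent down and recover $(1+\epsilon)\binom{n-1}{k-1}$.

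With the asymptotic and stability inputs secured, the second step is to rerun Subsections 3.1--3.3. For odd~$k$ this is automatic. For even~$k$ a new sub-case surfaces in the bound on $|H_1|$ when $\ell\le k-2$: there one derives only $2\ell>k-1$, hence $\ell\ge k/2$, and at $\ell=k/2$ the set $T\cup S_1$ driving the double count has size $k-\ell+1=\ell+1>\ell$, so the estimate $d_{H^*}(B)\le\binom{n-1-b}{\ell-b}(1+|\T|)$ of Claim~\ref{clm:Ffree}, which requires $b\le\ell$, is no longer applicable. I would handle $\ell=k/2$ on its own, either by analysing directly the four-edge configurations available at this threshold or by strengthening Claim~\ref{clm:Ffree} to control $(k-\ell+1)$-sets; this is exactly where the odd-versus-even asymmetry highlighted in Section~1 enters the present argument.

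I expect the asymptotic bound under the weakened hypothesis to be the main obstacle. The exact and stability portions are robust, being driven by the four-edge double counts already present in the paper, but forcing $|H|\le(1+\epsilon)\binom{n-1}{k-1}$ while forbidding only four-edge $2$-regular subgraphs is delicate: the single-type consequence above is provably insufficient, and for even~$k$ one can no longer lean on the small $2$-regular subgraphs that made the even case of Theorem~\ref{thm:MV} tractable. Orchestrating all four-edge types together to recover the sharp leading constant is, I believe, where the real difficulty lies.
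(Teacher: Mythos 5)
There is no proof of this statement in the paper to compare against: it is stated as Conjecture~\ref{conj:1} and left open. The paper's own Concluding Remarks make exactly the observation of your first paragraph --- that outside of Theorems~\ref{thm:asymptotics} and~\ref{thm:stability}, the proof of Theorem~\ref{thm:main} only ever invokes $2$-regular subgraphs on $2k$ vertices (the four-edge configurations of Claims~\ref{cl: produce f}, \ref{clm:n}, \ref{clm:Ffree} and~\ref{cl: intersecting}), and that consequently, to prove the conjecture for odd $k$ it would suffice to establish asymptotic and stability analogues under the weakened hypothesis. So your reduction is correct and faithful to the authors' intent, and your additional technical observations check out: a four-edge $2$-regular $k$-graph does have exactly $2k$ vertices; forbidding only the configuration of Claim~\ref{clm:n} forces intersecting common links and, via Erd\H{o}s--Ko--Rado and convexity, yields only $|H|=O(n^{k-1/2})$; and for even $k$ the case $\ell=k/2$ genuinely breaks the double count in Subsection~3.2, since the bound $d_{H^*}(B)\le\binom{n-1-b}{\ell-b}(1+|\T|)$ of Claim~\ref{clm:Ffree} requires $|B|\le\ell$ while $|T\cup S_1|=k-\ell+1>\ell$ there (indeed $2\ell-k-1<0$, so the binomial in the upper bound on $p$ is vacuous).

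The gap is that this is a program, not a proof, and the program's two pillars are precisely the open part of the problem. You do not establish the analogues of Theorems~\ref{thm:asymptotics} and~\ref{thm:stability} for hypergraphs avoiding only four-edge $2$-regular subgraphs; you yourself concede that tracing Mubayi--Verstra\"ete's arguments may fail wherever they extract larger $2$-regular configurations (which is unavoidable for even $k$, where the $3$-edge configurations on $3k/2$ vertices underlying Theorem~\ref{thm:MV} are now permitted), and that ``orchestrating all four-edge types together'' to recover the constant $1+\epsilon$ is where the real difficulty lies. Likewise the even-$k$ sub-case $\ell=k/2$ is flagged but not resolved. Since every genuinely hard step is deferred, the proposal does not prove Conjecture~\ref{conj:1}; it rediscovers, accurately and with some worthwhile extra detail, the reduction the paper itself records in Section~4, and the conjecture remains open.
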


%We further conjecture that Conjecture~\ref{conj:1} works for all (not only odd) integers $k\ge 3$.
For $k\ge 4$, Conjecture~\ref{conj: furedi} implies Conjecture~\ref{conj:1}.
Note that Conjecture~\ref{conj:1} stands betweenConjecture~\ref{conj: furedi} and the result on forbidding $2$-regular subgraphs.
In some sense it is more close to Conjecture~\ref{conj: furedi} -- because only finitely many (independent of $n$) configurations are forbidden (in contrast, by forbidding all $2$-regular subgraphs, the number of instances forbidden is related to $n$).
By our proof, to show Conjecture~\ref{conj:1} for odd integers $k$, it suffices to prove an asymptotical result and a stability result.
%\footnote{if $k\neq 4$. (because our proof does not deal with the case $k=4$.}
%However, we expect this to be hard

In this paper we focused on forbidding $2$-regular subgraphs.
It is natural to consider hypergraphs without $r$-regular subgraphs for $r\ge 3$ (see Question~6.9 in \cite{K}). We remark that Construction~6.8 in \cite{K} gives a lower bound on the maximum number of edges in such a hypergraph.
%\footnote{I belive that this construction is best possible for `some' values $r,k$. So I am a bit hesitant to explain how good this lower bound is in here.}


\begin{thebibliography}{99}
\bibitem{BBM}
J. Balogh,  T. Bohman and D. Mubayi,  Erd\H{o}s-Ko-Rado in random hypergraphs, {\em Combin. Probab. Comput.} {\bf 18} (2009), 629--646.

\bibitem{Brown}
W.~G. Brown, On graphs that do not contain a {T}homsen graph, {\em Canad. Math. Bull.} {\bf 9} (1966), 281--285.

\bibitem{Erdos77}
P.~Erd{\H{o}}s, Problems and results in combinatorial analysis, In {\em Proceedings of the {E}ighth {S}outheastern {C}onference on {C}ombinatorics, {G}raph {T}heory and {C}omputing ({L}ouisiana {S}tate {U}niv., {B}aton {R}ouge, {L}a., 1977)}, pages 3--12. Congressus Numerantium, No. XIX. Utilitas Math., Winnipeg, Man., 1977.
  
\bibitem{ERS}
P.~Erd{\H{o}}s, A.~R{\'e}nyi and V.~T. S{\'o}s, On a problem of graph theory, {\em Studia Sci. Math. Hungar.} {\bf 1} (1966), 215--235.

\bibitem{Frankl87_survey}
P.~Frankl, The shifting techniques in extremal set theory, in: Surveys in Combinatorics, {\em Lond.
Math. Soc. Lect. Note Ser.} {\bf 123} (1987), 81--110.

\bibitem{Furedi83}
Z.~F{\"u}redi, Graphs without quadrilaterals, {\em J. Combin. Theory Ser. B} {\bf 34} (1983), 187--190.

\bibitem{Furedi}
Z. F\"{u}redi, Hypergraphs in which all disjoint pairs have distinct unions, {\em Combinatorica} {\bf 4} (1984), 161--168.

\bibitem{Keevash11}
P.~Keevash, Hypergraph {T}ur\'an problems, In {\em Surveys in combinatorics 2011}, volume 392 of {\em London
  Math. Soc. Lecture Note Ser.}, pages 83--139. Cambridge Univ. Press,
  Cambridge, 2011.


\bibitem{K}
J. Kim, Regular subgraphs of uniform hypergraphs, {\em J. Combin. Theory. Ser. B} {\bf 119} (2016), 214--236.

\bibitem{KK}
J. Kim and A.V. Kostochka, Maximum hypergraphs without regular subgraphs, {\em Discuss. Math. Graph Theory} {\bf 34} (2014), 151–-166.

\bibitem{MV2004}
D. Mubayi and J. Verstra\"{e}te, A hypergraph extension of the bipartite Tur\'{a}n problem, {\em J. Combin. Theory Ser. A} {\bf 106} (2004), 237--253.

\bibitem{MV2009}
D. Mubayi and J. Verstra\"{e}te, Two-regular subgraphs of hypergraphs, {\em J. Combin. Theory. Ser. B} {\bf 99} (2009), 643--655.


\bibitem{PV}
O. Pikhurko and J. Verstra\"{e}te, The maximum size of hypergraphs without generalized $4$-cycles, {\em J. Combin. Theory. Ser. A} {\bf 116} (2009), 637--649. 

\bibitem{Pyber1985}
L. Pyber, Regular subgraphs of dense graphs, {\em Combinatorica} {\bf 5} (1985), 347--349.

\bibitem{PRS1995}
L. Pyber, V. R\"{o}dl and  E. Szemer\'{e}di, Dense graphs without $3$-regular subgraphs, {\em J. Combin. Theory. Ser. B } {\bf 63} (1995), 41--54.

\bibitem{Simonovits}
M.~Simonovits, A method for solving extremal problems in graph theory, stability problems, In {\em Theory of {G}raphs ({P}roc. {C}olloq., {T}ihany, 1966)}, pages 279--319. Academic Press, New York, 1968.



\end{thebibliography}
\end{document}